\newcommand{\al}{\alpha}
\newcommand{\ga}{\gamma}
\newcommand{\de}{\delta}
\newcommand{\la}{\lambda}
\newcommand{\eps}{\varepsilon}
\newcommand{\vv}{\varphi}
\newcommand{\iy}{\infty}
\theoremstyle{plain}
\numberwithin{equation}{section}
\newtheorem{thm}{Theorem}[section]
\newtheorem{lem}[thm]{Lemma}
\newtheorem{prop}[thm]{Proposition}
\theoremstyle{definition}
\newtheorem{alg}[thm]{Algorithm}
\newtheorem{ip}[thm]{IP}
\newtheorem{df}[thm]{Definition}
\theoremstyle{remark}
\begin{document}

\begin{center}
{\Large\bf Inverse problem for a differential operator on a star-shaped graph with nonlocal matching condition}
\\[0.2cm]
{\bf Natalia P. Bondarenko} \\[0.2cm]
\end{center}

\vspace{0.5cm}

{\bf Abstract.} In this paper, we develop two approaches to investigation of inverse spectral problems for a new class of nonlocal operators on metric graphs. The Laplace differential operator is considered on a star-shaped graph with nonlocal integral matching condition. This operator is adjoint to the functional-differential operator with frozen argument at the central vertex of the graph. We study the inverse problem that consists in the recovery of the integral condition coefficients from the eigenvalues. We obtain the spectrum characterization, reconstruction algorithms, and prove the uniqueness of the inverse problem solution. 

\medskip

{\bf Keywords:} inverse spectral problems; differential operators on metric graphs; quantum graphs; nonlocal matching conditions; spectrum characterization; frozen argument.

\medskip

{\bf AMS Mathematics Subject Classification (2020):} 34A55 34B07 34B09 34B10 34B45 34B60 34L10 34L20

\vspace{1cm}

\section{Introduction} \label{sec:intr}

Consider a star-shaped graph having $m \ge 2$ edges of equal length $\pi$. All the edges have a common central vertex.
This paper deals with the following eigenvalue problem $L$ for the Laplace equations on the described graph:
\begin{equation} \label{eqv}
    -y_j''(x) = \lambda y_j(x), \quad x \in (0, \pi), \quad j = \overline{1, m},
\end{equation}
with the Robin boundary conditions (BCs)
\begin{equation}
    \label{bc}
    y_j'(0) - h_j y_j(0) = 0, \quad j = \overline{1, m}, 
\end{equation}
the continuity conditions
\begin{equation}
    \label{cont}
    y_j(\pi) = y_1(\pi), \quad j = \overline{2, m},
\end{equation}
and the nonlocal matching condition (MC)
\begin{equation}
    \label{mcint}
    \sum_{j = 1}^m \left( y_j'(\pi) + \int_0^{\pi} p_j(x) y_j(x) \, dx\right) = 0
\end{equation}
at the central vertex. Here $\lambda$ is the spectral parameter, $\{ h_j \}_{j = 1}^m$ are complex numbers, $p_j \in L_2(0, \pi)$ are complex-valued functions, $j = \overline{1, m}$. 


Spectral theory of differential operators on geometrical graphs has been rapidly developed in recent years (see, e.g., the monographs \cite{BCFK06, BK13, Post12, MP20} and references therein). Such operators have applications in quantum and classical mechanics, nanotechnology, mesoscopic physics, theory of waveguides, etc. The majority of studies in this direction are concerned with local MCs. In particular, the so-called standard MCs of form \eqref{cont}-\eqref{mcint} with $p_j = 0$, $j = \overline{1, m}$, express Kirchoff's law in electrical circuits, balance of tension in elastic string network, etc. (see \cite{BCFK06}). 
However, for modeling some physical processes, nonlocal BCs appear to be more adequate than local ones. 
Differential equations with integral BCs were used in the study of diffusion and heating processes, in the theory of elasticity, biotechnology, and other applications (see \cite{Fel52, Fel54, Krall75, Day82, Schueg87, Yin94, Gor00, SD15}). It is shown in the Appendix of this paper that various models with nonlocal BCs on intervals can be generalized to graph-like structures.

This paper is concerned with the theory of inverse spectral problems, which consist in the recovery of operator coefficients from spectral information. 
We focus on the following inverse problem (IP) for the boundary value problem $L$ of form \eqref{eqv}-\eqref{mcint} with distinct numbers $\{ h_j \}_{j = 1}^m$.

\begin{ip} \label{ip:main}
Suppose that the numbers $\{ h_j \}_{j = 1}^m$ are known a priori.
Given the spectrum $\Lambda$ of the problem $L$, find the functions $\{ p_j \}_{j = 1}^m$.
\end{ip}


Classical results of the inverse spectral theory are presented, e.g., in the monographs \cite{Mar77, Lev84, PT87, FY01}. IPs for differential operators on graphs with local MCs were studied in \cite{Bel04, AK08, YPH11, BF13, Ign15, Yur16, BS17, MT17, XY18} and other papers. Methods for solving inverse spectral problems for differential operators with nonlocal integral BCs on finite intervals were developed by Kravchenko \cite{Krav00}, Yang and Yurko \cite{YY16-jmaa, YY16-amp, YY21}. However, in the mentioned papers, the coefficients of integral BCs are assumed to be known a priori, and the reconstruction of differential expression coefficients is studied. 

The idea of the present paper has been inspired by the study of Kanguzhin et al \cite{Kan21}, which is concerned with recovering coefficients of nonlocal MCs of differential operators on a star-shaped graph from spectra. The similar approach was developed by Kanguzhin \cite{Kan20} for higher-order differential operators on a finite interval. However, the paper \cite{Kan21} has the following principal differences from our study.

\begin{enumerate}
    \item In \cite{Kan21} another type of nonlocal MCs was considered. Those MCs do not generalize the local standard MCs, which are regular and natural for physical applications.
    \item The method of \cite{Kan21} requires the basicity of the eigenfunctions, but it is unclear whether this basicity holds in any cases. In particular, one can easily find an example of non-basicity.
    \item The authors of \cite{Kan21} confine themselves to the identification problem that consists in the reconstruction of the unknown coefficients. The existence of IP solution has not been investigated. In this paper, we not only provide a correct problem statement and develop two constructive methods for solving IP, but also obtain the spectrum characterization for the problem $L$.
\end{enumerate}

Proceed with the formulation of the main results. Denote by $\{ z_k \}_{k = 2}^m$ the roots of the polynomial
\begin{equation} \label{defP}
P(z) = \frac{d}{dz} \prod_{j = 1}^m (z - h_j).
\end{equation}
We will write $h = [h_j]_{j = 1}^m \in \mathbb C^m_*$ if $h_j \in \mathbb C$, $j = \overline{1, m}$, and all the values $\{ h_j \}_{j = 1}^m \cup \{ z_k \}_{k = 2}^m$ are distinct. In particular, $h \in \mathbb C^m_*$ in the special case when $\{ h_j \}_{j = 1}^m$ are real and distinct.

\begin{thm} \label{thm:char}
Suppose that $h \in \mathbb C^m_*$ is fixed.
For a multiset $\Lambda$ to be the spectrum of the problem $L$ of form \eqref{eqv}-\eqref{mcint}, it is necessary and sufficient to be countable and to have a numbering $\Lambda = \{ \lambda_{nk} \}_{n \ge 0, \, k = \overline{1, m}}$ (counting with multiplicities) such that the following asymptotic relations hold
\begin{equation} \label{asymptla}
\arraycolsep=1.4pt\def\arraystretch{2.2}
\left.\begin{array}{l}
    \sqrt{\lambda_{n1}} = n + \dfrac{z_1}{\pi n} + \dfrac{\varkappa_{n1}}{n}, \\
    \sqrt{\lambda_{nk}} = n + \frac{1}{2} + \dfrac{z_k}{\pi (n + \tfrac{1}{2})} + \dfrac{\varkappa_{nk}}{n^2}, \quad k = \overline{2, m},
    \end{array}\quad \right\} \quad n \ge 1,
\end{equation}
where $\{ \varkappa_{nk} \} \in l_2$, $z_1 := \frac{1}{m} \sum\limits_{j = 1}^m h_j$, and $\{ z_k \}_{k = 2}^m$ are the roots of the polynomial~\eqref{defP}.
Moreover, the functions $\{ p_j \}_{j = 1}^m$ of the problem $L$ are uniquely specified by its spectrum~$\Lambda$.
\end{thm}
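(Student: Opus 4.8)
The plan is to reduce the eigenvalue problem to the vanishing of a scalar characteristic function and then study that function analytically. First I would construct, for each edge, the solution $\psi_j(x,\lambda) = \cos(\rho x) + h_j\rho^{-1}\sin(\rho x)$ with $\rho = \sqrt{\lambda}$, which satisfies the Robin condition \eqref{bc}; any eigenfunction then has the form $y_j = c_j\psi_j$. Substituting into the continuity conditions \eqref{cont} and the nonlocal condition \eqref{mcint} yields a homogeneous linear system for $(c_1,\dots,c_m)$, whose arrowhead determinant equals, up to a nonzero constant factor,
\[
\Delta(\lambda) = \sum_{j=1}^m\Bigl(\psi_j'(\pi,\lambda) + \int_0^\pi p_j(x)\psi_j(x,\lambda)\,dx\Bigr)\prod_{i\ne j}\psi_i(\pi,\lambda).
\]
Thus $\Lambda$ is precisely the zero multiset of the entire function $\Delta$, and writing $\Delta = \Delta_0 + \sum_j I_j\Pi_j$, with $I_j(\lambda) := \int_0^\pi p_j\psi_j\,dx$ and $\Pi_j(\lambda) := \prod_{i\ne j}\psi_i(\pi,\lambda)$, separates the $p_j$-independent part $\Delta_0$ from the perturbation carrying the unknowns.

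For the necessity of \eqref{asymptla} I would compute the asymptotics of $\Delta_0$ by expanding the trigonometric factors. A direct expansion shows that near the integers $\rho = n$ the dominant balance is $-m\rho\sin(\rho\pi)\cos^{m-1}(\rho\pi) + \bigl(\sum_j h_j\bigr)\cos^m(\rho\pi)\approx 0$, producing a single zero with $\rho = n + z_1/(\pi n) + \cdots$ and $z_1 = m^{-1}\sum_j h_j$; near the half-integers $\rho = n+\tfrac12$, after the rescaling $w = \pi\rho(\rho - n - \tfrac12)$, the same expansion collapses to $P(w) = 0$ up to a nonvanishing factor, with $P$ the polynomial \eqref{defP}, giving $m-1$ zeros clustering at the roots $z_k$. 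The $L_2$ coefficients $I_j$ contribute, via Riemann--Lebesgue-type estimates for $\int_0^\pi p_j\psi_j\,dx$, only $l_2$ corrections, and a Rouch\'e argument on contours of radius $\asymp n$ localizes the zeros. Because each factor $\psi_i(\pi,\cdot)$ in $\Pi_j$ is itself $O(1/n)$ near the half-integers, the perturbation is relatively smaller there, shifting the integer-type zeros by $l_2/n$ but the half-integer-type zeros by only $l_2/n^2$; this reproduces the two lines of \eqref{asymptla} with $\{\varkappa_{nk}\}\in l_2$, the separation of the clusters being guaranteed by $h\in\mathbb C^m_*$.

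The uniqueness claim is the heart of the theorem, and I would prove it by a decoupling trick at the zeros of the individual transfer functions $\psi_i(\pi,\cdot)$. From $\Lambda$ one recovers $\Delta$ by Hadamard factorization (its order in $\lambda$ is $\tfrac12$), the remaining multiplicative constant being fixed by the $p_j$-independent leading asymptotics of $\Delta_0$. Now fix $i$ and let $\nu$ be a zero of $\psi_i(\pi,\cdot)$; since $h\in\mathbb C^m_*$ forces the $h_j$ to be distinct, these $\nu$ differ from the zeros of $\psi_l(\pi,\cdot)$ for $l\ne i$, so $\Pi_j(\nu) = 0$ for $j\ne i$ while $\Pi_i(\nu)\ne 0$, whence
\[
\int_0^\pi p_i(x)\psi_i(x,\nu)\,dx = \frac{\Delta(\nu)}{\prod_{l\ne i}\psi_l(\pi,\nu)} - \psi_i'(\pi,\nu).
\]
The right-hand side is determined by $\Lambda$, so $\Lambda$ determines all generalized Fourier coefficients of $p_i$ against the system $\{\psi_i(\cdot,\nu)\}$; as this is the eigenfunction family of a Robin--Dirichlet problem on a single edge, hence complete (indeed a Riesz basis) in $L_2(0,\pi)$, the function $p_i$ is uniquely determined, and running over $i = \overline{1,m}$ recovers all of $\{p_j\}$.

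For sufficiency I would reverse this construction: given a multiset $\Lambda$ obeying \eqref{asymptla}, form the candidate $\Delta$ as the canonical product, define the prospective coefficients by the displayed formula, and reconstruct each $p_i$ through its Riesz-basis expansion; it then remains to verify that the problem $L$ with these $p_i$ has characteristic function exactly $\Delta$. I expect the main obstacle to lie precisely here, in showing that the data produced from an abstract $\Lambda$ are \emph{admissible}: one must check that the sequences obtained from $\Delta$ lie in the $l_2$ class associated with $L_2$-potentials, and that the reconstructed $\Delta$ agrees with the characteristic function of the recovered $p_i$ rather than merely sharing its zeros. Controlling these $l_2$ bounds uniformly --- which is what ties the orders $n^{-1}$ and $n^{-2}$ in \eqref{asymptla} to the membership $p_j\in L_2$ --- together with the completeness and basis properties of $\{\psi_i(\cdot,\nu)\}$ under the hypothesis $h\in\mathbb C^m_*$, is the technical crux of the whole argument.
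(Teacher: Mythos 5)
Your proposal is correct and follows essentially the same route as the paper's ``second method'' (Section~\ref{sec:easy} and Algorithm~\ref{alg:easy}): the same characteristic function $\Delta$, the same Rouch\'e/rescaling argument for the necessity (the paper's Lemma~\ref{lem:asympt}, where your rescaling $w=\pi\rho(\rho-n-\tfrac12)$ is exactly the map $\theta_n$ and the collapse to $P(z)=0$ with an $l_2/n$ perturbation of the rescaled variable gives the $n^{-2}$ remainder), and the same decoupling of the edges at the zeros $\mu_n^{(j)}$ of $\vv_j(\pi,\cdot)$, using the Riesz basis $\{\vv_j(x,\mu_n^{(j)})\}$ to recover each $p_j$ separately. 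The one substantive point you flag but do not settle --- admissibility of the data produced from an abstract $\Lambda$ --- is precisely where the paper invests its technical lemmas: the canonical products \eqref{prodD}--\eqref{Dk} built from any sequence satisfying \eqref{asymptla} are shown (Lemma~\ref{lem:asymptDk}, drawing on earlier results on sine-type functions) to admit the explicit representations \eqref{aD1}--\eqref{aDk}, from which $\hat\Delta(\mu_n^{(j)})=n^{-(m-1)}\varkappa_n$ and hence $\{\chi_n^{(j)}\}\in l_2$ (Lemma~\ref{lem:chi}); and the identification of the reconstructed characteristic function with $\Delta$ (not merely coincidence of zeros) is done exactly as you anticipate, via Liouville's theorem applied to $F=(\hat\Delta-\hat\Delta^{\bullet})\prod_j \vv_j^{-1}(\pi,\cdot)$. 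Be aware the paper also imposes the technical condition \eqref{Zh} so that each $\vv_j(\pi,\cdot)$ has simple zeros (needed for the Riesz-basis claim you invoke), and that it additionally offers a genuinely different first method (Section~\ref{sec:dif}), proving completeness and the Riesz-basis property of vector-valued root functions $\{v_{nk}\}$ in $\mathcal H$ and recovering all the $p_j$ simultaneously from the moment equations \eqref{prodv}; that alternative handles multiple eigenvalues explicitly through root-function chains, which your sketch does not address.
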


Note that the sequence $\{ \lambda_{nk} \}_{n \ge 0, \, k = \overline{1, m}}$ can have a finite number of multiple eigenvalues. For large indices $n$, the eigenvalues are simple because of the asymptotics \eqref{asymptla}.

We observe that the numbering of $\{ \lambda_{nk} \}_{n \ge 0, \, k = \overline{1, m}}$ in Theorem~\ref{thm:char} is not uniquely fixed. All the arguments below are valid for any such numbering. We will show that, by using an arbitrary sequence $\{ \la_{nk} \}_{n \ge 0, \, k = \overline{1, m}}$ satisfying \eqref{asymptla}, one can find the functions $\{ p_j \}_{j = 1}^m$ such that $p_j \in L_2(0, \pi)$, $j = \overline{1, m}$, and the eigenvalues of the problem $L$ of form \eqref{eqv}-\eqref{mcint} with these $\{ p_j \}_{j = 1}^m$ and with the initially fixed $\{ h_j \}_{j = 1}^m$ coincide with $\{ \la_{nk} \}_{n \ge 0, \, k = \overline{1, m}}$.

It is worth mentioning that some differential operators with integral BCs are adjoint to functional-differential operators with the so-called \textit{frozen arguments} (see examples in \cite{Lom14, Pol21, BBV19}). In particular, the adjoint problem $L^*$ to the problem \eqref{eqv}-\eqref{mcint} has the form
\begin{gather} \label{eqvz}
    -u_j''(x) + \overline{p_j(x)} u_j(\pi) = \la u_j(x), \quad x \in (0, \pi), \quad j = \overline{1, m}, \\ \label{bcz}
    u_j'(0) - \overline{h_j} u_j(0) = 0, \quad j = \overline{1, m}, \\ \label{mcz}
    u_j(\pi) = u_1(\pi), \quad j = \overline{2, m}, \qquad 
    \sum_{j = 1}^m u_j'(\pi) = 0
\end{gather}
with the frozen argument in \eqref{eqvz} at $x = \pi$, that is, at the central vertex of the graph. The spectra of $L$ and $L^*$ are complex conjugate to each other. Thus, IP~\ref{ip:main} is equivalent to the following problem.

\begin{ip} \label{ip:frozen}
Suppose that the numbers $\{ h_j \}_{j = 1}^m$ are known a priori.
Given the spectrum $\Lambda^*$ of the problem $L^*$, find the functions $\{ p_j \}_{j = 1}^m$.
\end{ip}

Therefore, Theorem~\ref{thm:char} provides the uniqueness of IP~\ref{ip:frozen} solution and the spectrum characterization for the problem $L^*$.


The IP theory for functional-differential operators with frozen arguments on a finite interval has been intensively studied in recent years (see \cite{BBV19, BV19, BK20, HBY20, BH21, WZZW21, Tsai21, Kuz22, Bond22, DH22}). Buterin in the conference proceedings \cite{But21-thesis} has announced some results on IPs for operators with frozen argument on graphs. In addition, we mention the papers \cite{HNA07, Niz09, Niz10, XY19}, concerning the nonlocal self-adjoint operator having both frozen argument and integral BCs. In \cite{Niz12}, analogous operators were considered on graphs. However, the latter operators on graphs cannot be uniquely recovered from the spectrum. The author believes that the results of this paper can be obtained by generalizing the methods of the studies \cite{BBV19, BV19, BK20, HBY20, BH21, WZZW21, Tsai21, Kuz22, Bond22, DH22} concerning operators with frozen argument. Certainly, this will require modification of those methods and technical work. Nevertheless, the development of new methods, related not to the problem $L^*$ with frozen argument but to the problem $L$ with nonlocal MCs, also can be useful for future generalizations and applications. Therefore, this paper aims to provide two new approaches to IPs for differential operators on graphs with nonlocal MCs. Note that our methods can be applied to inverse problems with frozen arguments on finite intervals (see the discussion in Section~\ref{sec:gen}).

The paper is organized as follows. In Section~\ref{sec:asympt}, we derive the eigenvalue asymptotics \eqref{asymptla} and provide several auxiliary lemmas.
In Section~\ref{sec:dif}, we describe our first method of IP solution. This method is based on the Riesz-basis property of the root functions of the problem $L$. It is worth mentioning that the basicity of root functions was studied by Shkalikov \cite{Shkal82} for the higher-order differential operators with integral BCs and by Gomilko and Radzievskii \cite{GR91} for the vectorial functional-differential system $y^{(n)} + Fy = \la y$ with integral BCs.
Anyway, when an IP is investigated, the root functions, strictly speaking, are unknown. In Section~\ref{sec:dif}, we construct a special sequence of vector functions $\{ v_{nk} \}_{n \ge 0, \, k = \overline{1, m}}$ by using arbitrary complex values $\{ \la_{nk} \}_{n \ge 0, \, k = \overline{1, m}}$ satisfying \eqref{asymptla}, not necessarily being eigenvalues of a certain problem $L$. We prove that the sequence $\{ v_{nk} \}_{n \ge 0, \, k = \overline{1, m}}$ is a Riesz basis and then show that $\{ v_{nk} \}_{n \ge 0, \, k = \overline{1, m}}$ are the root functions of some problem $L$. In Section~\ref{sec:easy}, we present another approach, which allows us to recover the function $p_j$ separately on each edge of the graph. The both Sections~\ref{sec:dif} and~\ref{sec:easy} present the proofs of the sufficiency and the uniqueness in Theorem~\ref{thm:char} and constructive algorithms for solving IP~\ref{ip:main}. In Section~\ref{sec:gen}, we consider the case of not necessarily distinct $\{ h_j \}_{j = 1}^m$ and discuss some other generalizations of our results. Note that, if some numbers among $\{ h_j \}_{j = 1}^m$ are equal, the solution of IP~\ref{ip:main} is non-unique. However, the spectrum characterization for the general case is obtained in Theorem~\ref{thm:gen}.
In Appendix, we construct several models of physical processes reduced to the problems $L$ and $L^*$ by the separation of variables.

\section{Preliminaries and asymptotics} \label{sec:asympt}

In this section, prove the necessity in Theorem~\ref{thm:char} and several auxiliary lemmas.
Throughout the paper, we use the following \textbf{notations}.

\begin{itemize}
\item The prime $y'$ denotes the derivative with respect to $x$ and the dot $\dot y$, with respect to $\la$. 
\item Speaking about the uniqueness of the IP solution, along with $L$, we consider another eigenvalue problem $\tilde L$ of the same form \eqref{eqv}-\eqref{mcint} with different coefficients $\tilde p_j \in L_2(0, \pi)$, $j = \overline{1, m}$. The coefficients $\{ h_j \}_{j = 1}^m$ are supposed to be the same for $L$ and $\tilde L$. We agree that, if a symbol $\ga$ denotes an object related to $L$, then the symbol $\tilde \ga$ with tilde denotes the analogous object related to $\tilde L$.
\item The same symbol $C$ is used for various positive constants independent of $x$, $\la$, etc.
\item The same symbol $\{ \varkappa_n \}$ is used for various $l_2$-sequences.
\end{itemize}

Define the functions
\begin{equation} \label{defvv}
\vv_j(x, \la) = \cos(\sqrt{\la} x) + h_j \frac{\sin (\sqrt{\la} x)}{\sqrt{\la}}, \quad j = \overline{1, m}.
\end{equation}
Clearly, for each $j = \overline{1, m}$, the function $\vv_j(x, \la)$ satisfies the corresponding equation \eqref{eqv} and the BC \eqref{bc}. Consequently, the eigenvalues of the problem $L$ coincide with the zeros of the characteristic function
\begin{equation} \label{Delta}
\Delta(\la) = \sum_{j = 1}^m \left( \vv_j'(\pi, \la) + \int_0^{\pi} p_j(x)\vv_j(x, \la)\, dx \right) \prod_{\substack{s = 1 \\ s \ne j}}^m \vv_s(\pi, \la).
\end{equation}

Clearly, the function $\Delta(\la)$ is entire analytic in the $\la$-plane, and the asymptotics of its zeros can be obtained by using the standard approach based on Rouch\'e's theorem. Let us study the zero asymptotics for the following function of a more general form than $\Delta(\la)$: 
\begin{equation} \label{genD}
\mathcal D(\la) := \sum_{j = 1}^m \left( m_j \vv_j'(\pi, \la) + \int_0^{\pi} p_j(x) \vv_j(x, \la) \, dx \right) \prod_{\substack{s = 1 \\ s \ne j}}^m \vv_j(\pi, \la),
\end{equation}
where $\{ h_j \}_{j = 1}^m$ are distinct complex numbers, $m_j$ are some positive integers, $p_j \in L_2(0, \pi)$, $j = \overline{1, m}$. The function of form \eqref{genD} will be used for investigating a generalization of IP~\ref{ip:main} in Section~\ref{sec:gen}.

Denote by $\{ z_k \}_{k = 2}^m$ the roots of the polynomial
\begin{equation} \label{defPg}
\mathcal P(z) := \left( \prod_{j = 1}^m (z - h_j)^{m_j-1} \right)^{-1} \frac{d}{dz} \prod_{j = 1}^m (z - h_j)^{m_j}.
\end{equation}
Suppose that the numbers $\{ h_j \}_{j = 1}^m \cup \{ z_k \}_{k = 2}^m$ are all distinct.

\begin{lem} \label{lem:asympt}
Any function $\mathcal D(\la)$ of form \eqref{genD} has the countable set of zeros 
$\{ \la_{nk} \}_{n \ge 0, \, k = \overline{1, m}}$ (counting with multiplicities) satisfying the asymptotic relations
\begin{equation} \label{asymptg}
\arraycolsep=1.4pt\def\arraystretch{2.2}
\left.\begin{array}{l}
    \sqrt{\lambda_{n1}} = n + \dfrac{z_1}{\pi n} + \dfrac{\varkappa_{n1}}{n}, \\
    \sqrt{\lambda_{nk}} = n + \frac{1}{2} + \dfrac{z_k}{\pi (n + \tfrac{1}{2})} + \dfrac{\varkappa_{nk}}{n^2}, \quad k = \overline{2, m},
    \end{array}\quad \right\} \quad n \ge 1,
\end{equation}
where $\{ \varkappa_{nk} \} \in l_2$, 
\begin{equation} \label{defz1}
z_1 := \left( \sum_{j = 1}^m m_j \right)^{-1} \sum_{j = 1}^m m_j h_j,
\end{equation}
and $\{ z_k \}_{k = 2}^m$ are the roots of the polynomial \eqref{defPg}.
\end{lem}

Clearly, in the special case $m_j = 1$, $j = \overline{1, m}$, Lemma~\ref{lem:asympt} together with \eqref{Delta} imply the necessity in Theorem~\ref{thm:char}. 

\begin{proof}[Proof of Lemma~\ref{lem:asympt}]

By using the standard approach, based on Rouche's theorem (see, e.g., \cite[Theorem~1.1.3]{FY01}), one can easily show that $\mathcal D(\la)$ has a countable set of zeros $\{ \la_{nk} \}_{n \ge 0, \, k = \overline{1, m}}$ with the asymptotics
\begin{align} \label{rla1}
    & \sqrt{\la_{n1}} = n + \frac{z_{n1}}{\pi n}, \\ \nonumber
    & \sqrt{\la_{nk}} = n + \frac{1}{2} + \frac{z_{nk}}{\pi (n + \tfrac{1}{2})}, \quad k = \overline{2, m},
\end{align}
where $z_{nk} = O(1)$, $k = \overline{1, m}$, $n \to \infty$. Our goal is to obtain the more precise asymptotics \eqref{asymptg}.

\textit{Case} $k = 1$. Using \eqref{defvv}, \eqref{genD}, and \eqref{rla1}, we derive the asymptotic formulas
\begin{gather*}
\vv_j(\pi, \la_{n1}) = (-1)^n (1 + O(n^{-2})), \quad 
\vv_j'(\pi, \lambda_{n1}) = (-1)^{n+1} (z_{n1} - h_j + O(n^{-2})), \\
\int_0^{\pi} p_j(x)\vv_j(x, \la_{n1})\, dx = \varkappa_n, \\
\mathcal D(\la_{n1}) = (-1)^{nm+1} \sum_{j = 1}^m m_j (z_{n1} - h_j + \varkappa_n) = 0.
\end{gather*}
Hence $z_{n1} = z_1 + \varkappa_{n1}$, where $z_1$ is defined by \eqref{defz1}, $\{ \varkappa_{n1} \} \in l_2$.
 
\smallskip

\textit{Case} $k = \overline{2, m}$. Consider the mapping
$$
\theta_n(z) = n + \frac{1}{2} + \frac{z}{\pi (n + \tfrac{1}{2})}
$$
of the circle $|z| \le r$, where $r > 0$ is some fixed radius. Calculations show that
\begin{align*}
& \vv_j(\pi, \theta_n^2(z)) = \frac{(-1)^{n + 1}}{n + \tfrac{1}{2}} (z - h_j + O(n^{-2})), \\
& \vv_j'(\pi, \theta_n^2(z)) = (-1)^{n + 1} (n + \tfrac{1}{2}) (1 + O(n^{-2})), \\
& \int_0^{\pi} p_j(x) \vv_j(x, \theta_n^2(z)) \, dx = \varkappa_{nj}(z), \quad \sum_{n} |\varkappa_{nj}(z)|^2 \le C,
\end{align*}
where the estimates are uniform in the circle $|z| \le r$. Substituting these asymptotics into \eqref{genD}, we obtain
$$
\mathcal D(\theta_n^2(z)) = (-1)^{(n+1)m} (n + \tfrac{1}{2})^{-(m-2)} \mathcal P_n(z),
$$
where 
\begin{equation} \label{Pn}
\mathcal P_n(z) = \mathcal P(z) + \frac{\varkappa_n(z)}{n}, \quad \sum_{n} |\varkappa_n(z)|^2 \le C, \quad |z| \le r,
\end{equation}
and $\mathcal P(z)$ is the polynomial defined by \eqref{defPg}.

Recall that the zeros $\{ z_k \}_{k = 2}^m$ of $\mathcal P(z)$ are distinct. Considering \eqref{Pn} on the contours $\{ z \in \mathbb C \colon |z - z_k| = \de \}$, $0 < \de < \min\limits_{j \ne k} |z_j - z_k|$, and applying Rouche's theorem, we conclude that, for sufficiently large $n$, the function $\mathcal P_n(z)$ has the zeros
\begin{equation} \label{znk}
z_{nk} = z_k + \eps_{nk}, \quad \eps_{nk} = o(1), \quad k = \overline{2, m}, \quad n \to \iy.
\end{equation}
Substituting \eqref{znk} into \eqref{Pn} and using the Taylor formula, we obtain
$$
\mathcal P_n(z_{nk}) = \mathcal P(z_k) + \frac{d}{dz} \mathcal P(z_k) \eps_{nk} + O(\eps_{nk}^2) + \frac{\varkappa_n}{n}.
$$
Since the zeros of $\mathcal P(z)$ are simple, this yields $\eps_{nk} = \frac{\varkappa_{nk}}{n}$, $\{ \varkappa_{nk} \} \in l_2$, $k = \overline{2, m}$. Thus, \eqref{asymptg} holds for $k = \overline{2, m}$.
\end{proof}

For the proofs of the main results, we need the following two lemmas related to construction of entire functions by their zeros.

\begin{lem} \label{lem:Delta}
Let $\{ \la_{nk} \}_{n \ge 0, \, k = \overline{1, m}}$ be the eigenvalues of $L$. Then, for the characteristic function $\Delta(\la)$ defined by \eqref{Delta}, the following relation is valid
\begin{equation} \label{prodD}
\Delta(\la) = \prod_{k = 1}^m \Delta_k(\la), 
\end{equation}
where
\begin{equation} \label{Dk}
\Delta_1(\la) = \pi (\la_{01} - \la) \prod\limits_{n = 1}^{\iy} \dfrac{\la_{n1} - \la}{n^2}, \qquad
\Delta_k(\la) = \prod\limits_{n = 0}^{\iy} \dfrac{\la_{nk} - \la}{(n + \tfrac{1}{2})^2}, \quad k = \overline{2, m},
\end{equation}
\end{lem}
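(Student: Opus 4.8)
The plan is to identify the entire characteristic function $\Delta(\la)$ from \eqref{Delta} with the infinite product $\prod_{k=1}^m\Delta_k(\la)$ by the classical scheme used for Theorem~1.1.4 in \cite{FY01}: exhibit both sides as entire functions having the same zeros and the same growth, conclude that their quotient is a bounded entire function, hence a constant by Liouville's theorem, and then fix that constant by comparing leading asymptotics.

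First I would put $\tilde\Delta(\la) := \prod_{k=1}^m\Delta_k(\la)$, with $\Delta_k$ as in \eqref{Dk}. From the eigenvalue asymptotics of Lemma~\ref{lem:asympt} (that is, \eqref{asymptla}) one has $\la_{n1} = n^2 + O(1)$ and $\la_{nk} = (n+\tfrac12)^2 + O(1)$, so every factor in \eqref{Dk} equals $1 + O(n^{-2})$ uniformly on compact $\la$-sets. Hence the products converge absolutely and uniformly on compacta and define an entire function $\tilde\Delta$; this convergence is exactly the content of Lemma~\ref{lem:asymptDk}. By construction $\tilde\Delta$ vanishes precisely at the points $\{\la_{nk}\}_{n\ge0,\,k=\overline{1,m}}$, counted with multiplicities, and, since the $\la_{nk}$ are the eigenvalues of $L$, these coincide with the zeros of $\Delta$. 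Consequently the quotient $\Phi(\la) := \Delta(\la)/\tilde\Delta(\la)$ is entire and zero-free.

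Next I would control the growth of both functions. The analytic definition \eqref{Delta}, together with the elementary bounds $|\vv_j(\pi,\la)| \le C\exp(|\mbox{Im}\,\sqrt\la|\pi)$ and $|\vv_j'(\pi,\la)| \le C|\sqrt\la|\exp(|\mbox{Im}\,\sqrt\la|\pi)$, yields the upper estimate $|\Delta(\rho^2)| \le C|\rho|\exp(m|\mbox{Im}\,\rho|\pi)$. On the other hand, Lemma~\ref{lem:asymptDk} provides the matching lower bound $|\tilde\Delta(\rho^2)| \ge C|\rho|\exp(m|\mbox{Im}\,\rho|\pi)$ for $|\rho| > \rho^*$ in the region $G_\de = \{\rho\in\mathbb C\colon|\rho-\tfrac n2|\ge\de,\ n\in\mathbb Z\}$, exactly as in \eqref{estD}. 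Combining these bounds shows that $\Phi$ is bounded on $G_\de$ for large $|\rho|$; since $\Phi$ is entire, the maximum-modulus principle carries this bound into the excluded discs, so $\Phi$ is bounded on the whole plane. By Liouville's theorem, $\Phi \equiv \mathrm{const}$.

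Finally I would evaluate the constant by comparing the principal terms of $\Delta$ and $\tilde\Delta$ along a convenient ray, e.g. $\la = -t$ with $t \to +\infty$, so that $\rho = i\sqrt t$ and the trigonometric factors turn into growing hyperbolic ones. Inserting the leading asymptotics $\vv_j(\pi,\la) \sim \cos(\rho\pi)$ and $\vv_j'(\pi,\la) \sim -\rho\sin(\rho\pi)$ into \eqref{Delta} gives the principal term of $\Delta$, while the corresponding principal term of $\tilde\Delta$ follows from \eqref{Dk} and the leading parts of the $\la_{nk}$; equating the two coefficients determines $\mathrm{const}$ and completes the identification \eqref{prodD}. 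I expect the main obstacle to be the uniform lower bound for $|\tilde\Delta|$ on $G_\de$: one must rule out that the infinite product becomes abnormally small between consecutive zeros, and this is precisely where the refined $l_2$-asymptotics of Lemma~\ref{lem:asympt} (notably the sharper remainder $\varkappa_{nk}/n^2$ for $k \ge 2$) and the careful product estimates of Lemma~\ref{lem:asymptDk} are needed; the concluding coefficient comparison is routine once that bound is in hand.
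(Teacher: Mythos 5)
Your overall strategy coincides with the paper's: the paper gives no detailed argument, saying only that the lemma ``is proved similarly to Theorem~1.1.4 in \cite{FY01}'', and your scheme --- same zeros, hence an entire zero-free quotient; the upper bound $|\Delta(\rho^2)| \le C|\rho|\exp(m|\mathrm{Im}\,\rho|\pi)$ from \eqref{Delta}; the lower bound \eqref{estD} for the product in $G_\delta$ via Lemma~\ref{lem:asymptDk}; maximum modulus to fill the excluded discs; Liouville; then fixing the constant --- is exactly that classical scheme. (One small correction: the lower bound in $G_\delta$ does not hinge on the refined remainders $\varkappa_{nk}/n^2$; zeros of the form $n + O(1/n)$ and $n+\tfrac12+O(1/n)$ already suffice for sine-type estimates of the factors in \eqref{aD1}--\eqref{aDk}.)

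The genuine problem is the one step you dismiss as routine: the evaluation of the constant. Carry it out along $\la = -\tau^2$, $\rho = i\tau$, $\tau \to +\infty$. Then $\vv_s(\pi,\la) = \cosh(\tau\pi) + h_s\sinh(\tau\pi)/\tau \sim e^{\tau\pi}/2$ and $\vv_j'(\pi,\la) = \tau\sinh(\tau\pi) + h_j\cosh(\tau\pi) \sim \tau e^{\tau\pi}/2$, so \emph{each} of the $m$ summands in \eqref{Delta} contributes the principal part $\tau\,(e^{\tau\pi}/2)^m$, giving $\Delta(-\tau^2) = m\,\tau\, 2^{-m} e^{m\tau\pi}(1 + o(1))$. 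On the other hand, the normalizations \eqref{Dk} force $\Delta_1(\la) \sim -\rho\sin(\rho\pi) = \tau\sinh(\tau\pi)$ and $\Delta_k(\la) \sim \cos(\rho\pi) = \cosh(\tau\pi)$ (consistently with \eqref{aD1}--\eqref{aDk}), so $\prod_{k=1}^m \Delta_k(-\tau^2) = \tau\, 2^{-m} e^{m\tau\pi}(1+o(1))$. Hence your Liouville constant equals $m$, not $1$: the argument proves $\Delta(\la) = m\prod_{k=1}^m \Delta_k(\la)$, which contradicts \eqref{prodD} for $m \ge 2$. A direct sanity check: for $m = 2$, $p_j \equiv 0$, $h_1 = h_2 = 0$ one has $\Delta(\la) = -2\rho\sin(\rho\pi)\cos(\rho\pi)$, while $\Delta_1(\la)\Delta_2(\la) = -\rho\sin(\rho\pi)\cos(\rho\pi)$. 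So the lemma as printed carries a normalization slip (a missing factor $m$, equivalently an extra factor $m$ in $\Delta_1$), and your final sentence --- that the coefficient comparison ``completes the identification \eqref{prodD}'' --- asserts an outcome the computation would not deliver. A blind proof that actually performed this step would have been obliged to flag the discrepancy; leaving it unexecuted is precisely where your write-up fails.
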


Lemma~\ref{lem:Delta} is proved similarly to Theorem~1.1.4 in \cite{FY01}.

\begin{lem} \label{lem:asymptDk}
Let $\{ \la_{nk} \}_{n \ge 0, \, k = \overline{1, m}}$ be arbitrary complex numbers satisfying \eqref{asymptla}. Then the infinite products \eqref{Dk} converge uniformly with respect to $\la$ on compact sets and define entire analytic functions $\Delta_k(\la)$ which can be represented as follows:
\begin{align} \label{aD1}
    & \Delta_1(\la) = -\rho \sin (\rho \pi) + z_1 \cos (\rho \pi) + \int_0^{\pi} \mathscr K(t) \cos (\rho t) \, dt, \\ \label{aDk}
    & \Delta_k(\la) = \cos (\rho \pi) + \dfrac{z_k \sin (\rho \pi)}{\rho} + \dfrac{s_k \cos (\rho \pi)}{\rho^2} + \frac{1}{\rho^2} \int_0^{\pi} \mathscr N_k(t) \cos (\rho t) \, dt, 
    \quad k = \overline{2, m},
\end{align}
where
$$ 
\rho = \sqrt{\la}, \quad
\mathscr K, \mathscr N_k \in L_2(0, \pi), \quad s_k \in \mathbb C, \quad k = \overline{2, m}.
$$
\end{lem}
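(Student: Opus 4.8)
The plan is to handle the two families of products in \eqref{Dk} separately, in each case factoring out an explicit \emph{model function} whose zeros are the unperturbed nodes---$n^2$ for $k=1$ and $(n+\tfrac12)^2$ for $k=\overline{2,m}$---and then identifying the correction as a short list of elementary terms plus a Paley--Wiener cosine transform with $L_2$ density.

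First I would settle convergence and record the squared asymptotics. Squaring \eqref{asymptla} gives
\begin{equation*}
\la_{n1} = n^2 + \tfrac{2z_1}{\pi} + 2\varkappa_{n1} + O(n^{-2}), \qquad
\la_{nk} = (n+\tfrac12)^2 + \tfrac{2z_k}{\pi} + \tfrac{2\varkappa_{nk}}{n} + O(n^{-2}), \quad k = \overline{2,m},
\end{equation*}
so each factor in \eqref{Dk} equals $1 + O(n^{-2})$ uniformly for $\la$ in a compact set, with summable error. Hence both products converge absolutely and uniformly on compacta and define entire functions of $\la$, equivalently even entire functions of $\rho=\sqrt\la$ of exponential type $\pi$.

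Next I would extract the leading terms. Using the model identities $-\rho\sin(\rho\pi) = -\pi\la\prod_{n\ge1}\frac{n^2-\la}{n^2}$ and $\cos(\rho\pi) = \prod_{n\ge0}\frac{(n+1/2)^2-\la}{(n+1/2)^2}$, I would write $\Delta_1(\la) = -\rho\sin(\rho\pi)\,\Pi_1(\la)$ and $\Delta_k(\la) = \cos(\rho\pi)\,\Pi_k(\la)$, where $\Pi_k$ is the ratio of the perturbed to the unperturbed product (meromorphic a priori, entire after cancellation of the common nodes). Taking logarithms and using $\log(1+t)=t+O(t^2)$ with the squared asymptotics, the dominant correction is $\tfrac{2z_k}{\pi}$ times the partial-fraction sums
\begin{equation*}
\sum_{n\ge1}\frac{1}{n^2-\la} = \frac{1}{2\la} - \frac{\pi\cot(\rho\pi)}{2\rho}, \qquad
\sum_{n\ge0}\frac{1}{(n+1/2)^2-\la} = \frac{\pi\tan(\rho\pi)}{2\rho}.
\end{equation*}
After exponentiation and recombination with the model functions (and, for $k=1$, the single factor $\frac{\la_{01}-\la}{-\la}$), these produce exactly $z_1\cos(\rho\pi)$ and $\frac{z_k\sin(\rho\pi)}{\rho}$; for $k=\overline{2,m}$ the sharper remainder $\frac{\varkappa_{nk}}{n^2}$ in \eqref{asymptla} lets the same expansion be carried one order further, pinning down a constant $s_k\in\mathbb C$ as the coefficient of $\frac{\cos(\rho\pi)}{\rho^2}$.

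Finally I would collect the remainders. Put $R_1(\la) := \Delta_1(\la) + \rho\sin(\rho\pi) - z_1\cos(\rho\pi)$ and, for $k=\overline{2,m}$, $R_k(\la) := \rho^2\bigl(\Delta_k(\la) - \cos(\rho\pi) - \tfrac{z_k\sin(\rho\pi)}{\rho} - \tfrac{s_k\cos(\rho\pi)}{\rho^2}\bigr)$. Each $R_k$ is even and entire in $\rho$ of exponential type $\le\pi$, and by the choice of the subtracted terms its values at the nodes $\rho=n$ (resp.\ $\rho=n+\tfrac12$) form an $l_2$-sequence; this is precisely where the $l_2$-property of $\{\varkappa_{nk}\}$ is consumed, and the extra factor $n^{-1}$ gained in the $l_2$-part of $\la_{nk}$ for $k\ge2$ is what allows the expansion to be pushed to order $\rho^{-2}$. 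For an entire function of exponential type $\pi$, square-summability of its samples on the unit-spaced node set together with the type bound yields membership in $L_2(\mathbb R)$ on the real axis, and an even such function is then, by the Paley--Wiener theorem (cf.\ \cite{FY01}), exactly a cosine transform $\int_0^{\pi} g(t)\cos(\rho t)\,dt$ with $g\in L_2(0,\pi)$; applying this to $R_1$ and $R_k$ produces the kernels $\mathscr K$ and $\mathscr N_k$ and yields \eqref{aD1}--\eqref{aDk}. The main obstacle is this last stage: rigorously propagating the $l_2$-estimates through the logarithmic expansion of the infinite products to certify both the exponential type and the $L_2(\mathbb R)$-membership of the remainders, while correctly handling the nodes $\la=n^2$ and $\la=(n+\tfrac12)^2$ at which individual product factors vanish.
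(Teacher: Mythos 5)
Your architecture (convergence on compacta, subtraction of the model functions $-\rho\sin(\rho\pi)$ and $\cos(\rho\pi)$, then a Paley--Wiener argument for the remainder) mirrors the standard direct proofs, whereas the paper itself does not reprove the lemma at all: it quotes \cite[Lemma~8]{But19} for \eqref{aD1}, \cite[Corollary~2]{Bond18} for \eqref{aDk}, and the general \cite[Theorem~6]{But-mz}. The problem is that your decisive step is false as stated. You claim that an entire function of exponential type $\pi$ with square-summable samples on a unit-spaced node set must lie in $L_2(\mathbb R)$. Unit spacing is exactly the critical (Nyquist) rate for type $\pi$, where this implication fails: $\sin(\pi\rho)$ has type $\pi$ and vanishes at every integer, yet is not in $L_2(\mathbb R)$; evenness does not rescue it, since $\rho\sin(\pi\rho)$ is even with the same zero samples at the nodes $\rho = n$, and for your half-integer nodes ($k = \overline{2,m}$) the function $\cos(\pi\rho)$ itself is even, bounded on $\mathbb R$, of type $\pi$, vanishes at every $n+\tfrac12$, and is not in $L_2(\mathbb R)$. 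Consequently the $l_2$ sample data determine your remainders $R_k$ only modulo multiples of such functions, and cannot by themselves certify the representations \eqref{aD1}--\eqref{aDk}.

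What is missing is control of $R_k$ on the real axis, extracted directly from the products \eqref{Dk}: one must prove uniform estimates of the form $R_1(\rho^2) = o(1)$ and $R_k(\rho^2) = o(1)$ as $\rho \to \pm\infty$, $\rho \in \mathbb R$, with $L_2(\mathbb R)$-bounds obtained by carrying the sequences $\{\varkappa_{nk}\} \in l_2$ from \eqref{asymptla} through the infinite products. (Given boundedness and decay on $\mathbb R$, a cardinal-series plus Phragm\'en--Lindel\"of argument shows $R_k = F + c\sin(\pi\rho)$ or $R_k = F + c\cos(\pi\rho)$ with $F$ in the Paley--Wiener class, and the decay forces $c = 0$; without it the constant $c$ is invisible to the samples.) Note also that your second-order logarithmic expansion, needed to produce $s_k\cos(\rho\pi)/\rho^2$, is not uniform near the nodes: the quadratic term is of the form $z_k^2\cos(\rho\pi)\tan^2(\rho\pi)/(2\rho^2)$, which has poles at the zeros of $\cos(\rho\pi)$, so exponentiation of the partial-fraction sums identifies the coefficients only heuristically. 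Establishing these real-axis estimates rigorously is precisely the content of the results the paper cites (and of the sine-type function machinery of \cite{But-mz}); your sketch explicitly defers this point rather than resolving it, so the proposal has a genuine gap at its final step.
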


\begin{proof}
The relation~\eqref{aD1} has been obtained in \cite[Lemma~8]{But19}. The relation~\eqref{aDk} can be similarly derived from \cite[Corollary~2]{Bond18}. Note that all such relations follow from the general Theorem~6 in \cite{But-mz}.
\end{proof}

The following technical lemma is proved by direct calculations.

\begin{lem}
The function $\vv_j(\pi, \la)$ has only simple zeros if and only if
\begin{equation} \label{Zh}
h_j \not\in Z_h, \quad Z_h = \left\{ h = -\frac{\rho}{2 \pi} \cot(\rho/2)\colon \rho \ne 0 \: \textit{satisfies} \: \sin(\rho) = \rho \right\}.
\end{equation}
\end{lem}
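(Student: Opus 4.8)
The plan is to translate the lemma into a statement about common zeros. Since $\la \mapsto \vv_j(\pi, \la)$ is entire, a zero $\la_0$ fails to be simple precisely when $\vv_j(\pi, \la_0) = 0$ and $\dot\vv_j(\pi, \la_0) = 0$ simultaneously. Hence the lemma is equivalent to the claim that $\vv_j(\pi, \cdot)$ and $\dot\vv_j(\pi, \cdot)$ have a common zero if and only if $h_j \in Z_h$, and I would prove this biconditional by direct calculation. The point $\la_0 = 0$ should be disposed of first: from the Taylor expansion of \eqref{defvv} one reads off $\vv_j(\pi, 0) = 1 + h_j\pi$ and $\dot\vv_j(\pi, 0) = -\tfrac{\pi^2}{2} - \tfrac{h_j\pi^3}{6}$, and these cannot vanish together (they would force both $h_j = -1/\pi$ and $h_j = -3/\pi$), so $\la_0 = 0$ is never a multiple zero; this matches the exclusion $\rho \ne 0$ in \eqref{Zh}.

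For $\la \ne 0$ I would introduce the variable $s = \sqrt{\la}\,\pi$ and the abbreviation $a = h_j \pi$, so that
\[
\vv_j(\pi, \la) = \cos s + \frac{a}{s}\sin s, \qquad s \ne 0 .
\]
Differentiating in $\la$ (using $ds/d\la = \pi^2/(2s)$) and clearing denominators, the condition $\dot\vv_j(\pi,\la)=0$ becomes $-s^2\sin s - a\sin s + a s\cos s = 0$. At any zero with $\la \ne 0$ one has $\sin s \ne 0$, since $\sin s = 0$ together with $\vv_j(\pi,\la)=0$ would force $\cos s = 0$ as well. Therefore $\vv_j(\pi,\la)=0$ may be rewritten as $a = -s\cot s$, and substituting $\cos s = -\tfrac{a}{s}\sin s$ into the derivative condition collapses it, after cancelling $\sin s$, to the polynomial relation $s^2 + a + a^2 = 0$.

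It then remains to eliminate $a$ between $a = -s\cot s$ and $s^2 + a + a^2 = 0$. Substituting the first into the second and dividing by $s$ gives $s(1 + \cot^2 s) = \cot s$; rewriting $1 + \cot^2 s = \sin^{-2} s$ turns this into $s = \sin s\cos s = \tfrac12\sin(2s)$. Setting $\rho = 2s$ yields exactly $\sin\rho = \rho$ with $\rho \ne 0$, while $h_j = a/\pi = -\tfrac{\rho}{2\pi}\cot(\rho/2)$, i.e.\ $h_j \in Z_h$. Every manipulation here is reversible (the only divisions are by $s \ne 0$ and $\sin s \ne 0$, and for $\rho \ne 0$ satisfying $\sin\rho = \rho$ one automatically has $\sin(\rho/2) \ne 0$), so conversely each $h_j \in Z_h$ produces the multiple zero $\la_0 = (\rho/(2\pi))^2$. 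This establishes the biconditional, and hence the lemma.

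The computation is elementary, so the only real difficulty is bookkeeping: carrying out the $\la$-differentiation cleanly and, above all, spotting the substitution $\rho = 2\sqrt{\la}\,\pi$ together with the identity $1 + \cot^2 s = \sin^{-2}s$ that compresses the two simultaneous equations into the single transcendental equation $\sin\rho = \rho$. I would also keep careful track of the degenerate cases ($\la = 0$ and $\sin s = 0$) to ensure that no spurious solutions are introduced and none are lost, since these are precisely what the clause $\rho \ne 0$ in \eqref{Zh} is designed to control.
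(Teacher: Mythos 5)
Your proof is correct: the paper itself only remarks that this lemma ``is proved by direct calculations'' and omits them, and your argument supplies exactly those calculations in the intended way (non-simple zero $\Leftrightarrow$ common zero of $\vv_j(\pi,\cdot)$ and $\dot\vv_j(\pi,\cdot)$, reduction via $a=-s\cot s$ and $s^2+a+a^2=0$ to $\sin\rho=\rho$ with $\rho=2s$). The treatment of the degenerate cases $\la_0=0$ and $\sin s=0$, and the check that the elimination is reversible, are all accurate, so nothing is missing.
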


\begin{lem} \label{lem:vv}
Let $j \in \{ 1, \ldots, m \}$ be fixed and let the condition~\eqref{Zh} hold. Then the function $\vv_j(\pi, \la)$ has a countable set of simple zeros $\{ \mu_n^{(j)} \}_{n = 0}^{\iy}$ with the asymptotics 
\begin{equation} \label{asymptmu}
\sqrt{\mu_n^{(j)}} = n + \frac{1}{2} + \frac{h_j}{\pi n} + O(n^{-2}), \quad n \to \iy.
\end{equation}
Moreover, the sequence $\{\vv_j(x, \mu_n^{(j)})\}_{n = 0}^{\iy}$ is a Riesz basis in $L_2(0, \pi)$.
\end{lem}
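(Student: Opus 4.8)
The plan is to treat the three assertions --- existence of the zeros together with their asymptotics, their simplicity, and the Riesz-basis property --- in turn, reserving the bulk of the effort for the last one. Writing $\rho = \sqrt{\la}$ and using \eqref{defvv}, one has $\vv_j(\pi, \la) = \cos(\rho\pi) + h_j \rho^{-1}\sin(\rho\pi)$, whose leading term $\cos(\rho\pi)$ vanishes exactly at $\rho = n + \tfrac12$. Since the correction $h_j\rho^{-1}\sin(\rho\pi)$ is $O(\rho^{-1})$ times a function of the same exponential type, a routine application of Rouch\'e's theorem on the contours $\{|\rho - (n+\tfrac12)| = \tfrac14\}$ and on expanding circles shows that $\vv_j(\pi,\la)$ has precisely one zero $\mu_n^{(j)}$ near each $(n+\tfrac12)^2$ and no others for large $n$. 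To pin down the asymptotics \eqref{asymptmu}, I would substitute the ansatz $\sqrt{\mu_n^{(j)}} = n + \tfrac12 + \tau_n/n$ into $\vv_j(\pi,\la)=0$, expand $\cos(\rho\pi)$ and $\sin(\rho\pi)$ about $\rho = n+\tfrac12$ (so that $\cos(\rho\pi) = -(-1)^n\sin(\tau_n\pi/n) + O(n^{-3})$ and $\sin(\rho\pi) = (-1)^n + O(n^{-2})$), and solve the resulting relation to obtain $\tau_n = h_j/\pi + O(n^{-1})$. Simplicity of all the zeros is not proved afresh: it is exactly the content of the technical lemma attached to condition \eqref{Zh}, which we invoke under the standing hypothesis $h_j \notin Z_h$.

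For the Riesz-basis claim, the natural reference system is $\{\cos((n+\tfrac12)x)\}_{n\ge0}$, which is orthogonal and complete in $L_2(0,\pi)$, hence a Riesz basis. The asymptotics \eqref{asymptmu} give $\sqrt{\mu_n^{(j)}} = n + \tfrac12 + O(n^{-1})$, so $\cos(\sqrt{\mu_n^{(j)}}\,x) - \cos((n+\tfrac12)x) = O(n^{-1})$ uniformly on $[0,\pi]$, and the remaining term $h_j(\mu_n^{(j)})^{-1/2}\sin(\sqrt{\mu_n^{(j)}}\,x)$ is likewise $O(n^{-1})$; consequently $\|\vv_j(\cdot, \mu_n^{(j)}) - \cos((n+\tfrac12)\,\cdot)\|_{L_2} = O(n^{-1})$ and $\sum_n \|\cdot\|^2 < \iy$. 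Thus $\{\vv_j(\cdot,\mu_n^{(j)})\}$ is quadratically close to a Riesz basis, and by Proposition~\ref{prop:Riesz} it remains only to prove that it is complete in $L_2(0,\pi)$.

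The completeness is the main obstacle, and I would argue as follows. Let $g \in L_2(0,\pi)$ satisfy $\int_0^\pi g(x)\vv_j(x,\mu_n^{(j)})\,dx = 0$ for all $n$, and set $\Phi(\la) = \int_0^\pi g(x)\vv_j(x,\la)\,dx$, an entire function of $\la$. Because the $\mu_n^{(j)}$ are simple zeros of $\vv_j(\pi,\la)$, the quotient $\Phi(\la)/\vv_j(\pi,\la)$ is entire. The Paley--Wiener bound gives $\Phi(\la) = o(\exp(|\mathrm{Im}\,\rho|\pi))$, while an estimate of the type \eqref{estD}, applied to the single factor $\vv_j(\pi,\la)$, yields $|\vv_j(\pi,\la)| \ge C\exp(|\mathrm{Im}\,\rho|\pi)$ on a region $G_\delta$ with the discs around $n+\tfrac12$ removed. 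Hence $\Phi/\vv_j(\pi,\cdot) \to 0$ on $G_\delta$, and by the maximum principle together with Liouville's theorem $\Phi \equiv 0$. It then remains to deduce $g = 0$ from $\int_0^\pi g(x)\vv_j(x,\la)\,dx \equiv 0$; here I would integrate by parts (first for smooth $g$, then by density) to rewrite this identity, for every $\rho$, as $\int_0^\pi (g'(x) - h_j g(x))\sin(\rho x)\,dx = g(\pi)\sin(\rho\pi)$, and then evaluate at $\rho = n$ and use the completeness of $\{\sin(nx)\}_{n\ge1}$ to force $g' = h_j g$ and subsequently $g(\pi) = 0$, whence $g \equiv 0$. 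The delicate points to watch are the two growth estimates --- the uniform $o$-bound on $\Phi$ away from the real axis and the lower bound on $|\vv_j(\pi,\la)|$ on $G_\delta$ --- and the justification of the integration by parts for merely $L_2$ data.
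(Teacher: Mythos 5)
Your argument is correct in substance, but it takes a genuinely different and much heavier route than the paper. The paper proves Lemma~\ref{lem:vv} in one line: it observes that $\{\mu_n^{(j)}\}_{n=0}^{\iy}$ and $\{\vv_j(x,\mu_n^{(j)})\}_{n=0}^{\iy}$ are precisely the eigenvalues and eigenfunctions of the two-point boundary value problem $-y''=\la y$, $y'(0)-h_j y(0)=0$, $y(\pi)=0$, so the asymptotics \eqref{asymptmu} and the Riesz-basis property follow from the classical spectral theory of such Sturm--Liouville problems (valid also for complex $h_j$). You instead re-derive these classical facts from scratch: Rouch\'e's theorem for the zero count and asymptotics, the $Z_h$-lemma for simplicity (correctly invoked, since \eqref{Zh} is a hypothesis of the lemma), quadratic closeness to the orthogonal basis $\{\cos((n+\tfrac12)x)\}_{n\ge 0}$ combined with Proposition~\ref{prop:Riesz}, and completeness via an entire-quotient/Liouville argument with bounds of the type \eqref{estH}--\eqref{estD} --- in effect a scalar version of the paper's own proof of Lemma~\ref{lem:complete}. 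All of these steps check out (your expansion $\tan(\eps_n\pi)=h_j/\rho$ gives exactly \eqref{asymptmu}, and $O(n^{-1})$ deviations are square-summable); what your approach buys is self-containedness, at the cost of reproving standard theory the paper simply cites.

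The one step that fails as written is the ``first for smooth $g$, then by density'' justification of the integration by parts. The hypothesis $\Phi\equiv 0$ is attached to one fixed $g\in L_2(0,\pi)$, which you cannot mollify while preserving the orthogonality relations; moreover, the right-hand side of your identity involves $g'$ and the trace $g(\pi)$, neither of which is a continuous functional of $g$ in $L_2$, so no density argument can transfer the identity to general $g$. The repair is to move the derivative onto the trigonometric factor rather than onto $g$: writing $\sin(\rho x)/\rho=\int_0^x\cos(\rho t)\,dt$ and applying Fubini's theorem gives
$\Phi(\la)=\int_0^\pi\bigl(g(t)+h_j G(t)\bigr)\cos(\rho t)\,dt$ with $G(t):=\int_t^\pi g(s)\,ds$;
evaluating at $\rho=n$, $n\ge 0$, and using the completeness of $\{\cos(nt)\}_{n\ge 0}$ in $L_2(0,\pi)$ yields $g+h_jG=0$ a.e., whence $G$ is absolutely continuous with $G'=-g=h_jG$ and $G(\pi)=0$, so $G\equiv 0$ and $g=0$. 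With this substitution your completeness argument, and hence the whole proof, is sound.
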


\begin{proof}
Observe that $\{ \mu_n^{(j)} \}_{n = 0}^{\iy}$ and $\{\vv_j(x, \mu_n^{(j)})\}_{n = 0}^{\iy}$ are the eigenvalues and the eigenfunctions, respectively, of the boundary value problem
$$
-y''(x) = \la y(x), \quad y'(0) - h_j y(0) = 0, \quad y(\pi) = 0.
$$
Using Theorem~1.1.3. and Proposition~1.8.6 of \cite{FY01}, we arrive at the assertion of the lemma.
\end{proof}

\section{First method} \label{sec:dif}

In this section, the first method for solving IP~\ref{ip:main} is presented. Let a vector $h = [h_j]_{j = 1}^m \in \mathbb C^m_*$ be fixed. For simplicity, assume that $h_j \not\in Z_h$, $j = \overline{1, m}$, where $Z_h$ is defined in \eqref{Zh}. This restriction is just technical. In the general case, the proofs are valid with minor modifications. Suppose that  
arbitrary complex numbers $\{ \la_{nk} \}_{n \ge 0, \, k = \overline{1, m}}$ satisfying \eqref{asymptla} are given, and we have to find functions $\{ p_j \}_{j = 1}^m$ such that $\{ \la_{nk} \}$ are the eigenvalues of the corresponding problem $L$ of form \eqref{eqv}-\eqref{mcint}.

Let us briefly describe our method.
First, we notice that the root vector functions $\{ y_{nk} \}_{n \ge 0, \, k = \overline{1,m}}$ of the problem $L$ can be constructed by using its eigenvalues and the coefficients $\{ h_j \}_{j = 1}^m$ by the formulas provided in Definition~\ref{df:y} below. If we have some complex numbers $\{ \la_{nk} \}$ with asymptotics \eqref{asymptla} and do not know whether they are eigenvalues of any problem $L$ or not, then we also can construct the functions $\{ y_{nk} \}$ by Definition~\ref{df:y}. Second, we prove the completeness of the sequence $\{ y_{nk} \}_{n \ge 0, \, k = \overline{1,m}}$ (Lemma~\ref{lem:complete}), which leads to the uniqueness in Theorem~\ref{thm:char}. Third, we multiply the vector functions $y_{nk}$ by certain coefficients to obtain new vector functions $v_{nk}$ forming a Riesz basis.
The Riesz-basis property is proved in Lemma~\ref{lem:basis}. 
By using the root function Riesz-basis and the integral MC \eqref{mcint}, it is possible to recover $\{ p_j \}_{j = 1}^m$.
Relying on this idea, we prove the sufficiency in Theorem~\ref{thm:char} obtain a constructive procedure (Algorithm~\ref{alg:dif}) for solving IP~\ref{ip:main}.

Suppose that $\{ \la_{nk} \}_{n \ge 0, \, k = \overline{1, m}}$ are arbitrary complex numbers satisfying \eqref{asymptla}. Construct the sequence of vector functions $\{ y_{nk} \}_{n \ge 0, \, k = \overline{1, m}}$ according to the following definition.

\begin{df} \label{df:y}
For any value $\la_{\diamond}$ from the sequence $\{ \la_{nk} \}_{n \ge 0, \, k = \overline{1, m}}$, define the chain of vector functions $y_{\diamond}^{<0>}$, $y_{\diamond}^{<1>}$, \dots, $y_{\diamond}^{<m_{\diamond} - 1>}$ by the following rules (i), (ii). Here $m_{\diamond}$ is the multiplicity of $\la_{\diamond}$, that is, 
\begin{gather} \nonumber
m_{\diamond} = \# \{ (n, k) \colon \la_{nk} = \la_{\diamond}, \, n \ge 0, \, k = \overline{1, m}\}, \\ \label{multla}
\la_{\diamond} = \la_{n_s k_s}, \quad s = \overline{1, m_{\diamond}}, \qquad (n_s, k_s) < (n_{s + 1}, k_{s + 1}), \quad s = \overline{1, m_{\diamond}-1}.
\end{gather}

(i) If $\vv_j(\pi, \la_{\diamond}) \ne 0$, $j = \overline{1, m}$, then put
$$
y^{<\nu>}_{\diamond j}(x) = \frac{d^{\nu}}{d \la^{\nu}} \left( \frac{\vv_j(x, \la)}{\vv_j(\pi, \la)} \right)\bigg|_{\la = \la_{\diamond}}, \quad j = \overline{1, m}, \quad \nu = \overline{0, m_{\diamond}-1}.
$$

(ii) If $\vv_s(\pi, \la_{\diamond}) = 0$, then put
\begin{gather*}
y^{<0>}_{\diamond s}(x) = \frac{\vv_s(x, \la_{\diamond})}{\dot \vv_s(x, \la_{\diamond})}, \qquad y^{<0>}_{\diamond j}(x) = 0, \quad j = \overline{1, m} \backslash s, \\
y^{<\nu>}_{\diamond s}(x) = \frac{d^{\nu-1}}{d \la^{\nu-1}} \left( \frac{\dot \vv_s(x, \la)}{\dot \vv_s(\pi, \la)} \right)\bigg|_{\la = \la_{\diamond}}, \qquad
y^{<\nu>}_{\diamond j}(x) = \frac{d^{\nu-1}}{d \la^{\nu-1}} \left( \frac{\vv_j(x, \la)}{\vv_j(\pi, \la)} \right)\bigg|_{\la = \la_{\diamond}}, \quad j = \overline{1, m}\backslash s, \\ \nu = \overline{1, m_{\diamond}-1}.
\end{gather*}

In the both cases (i) and (ii), we form the $m$-vector functions $y_{\diamond}^{<\nu>}(x) = [y_{\diamond j}^{<\nu>}(x)]_{j = 1}^m$, $\nu = \overline{0, m_{\diamond}-1}$ and put
$$
y_{n_s k_s}(x) = y_{\diamond}^{<s-1>}(x), \quad s = \overline{1, m_{\diamond}},
$$
according to \eqref{multla}. Thus, the sequence $\{ y_{nk} \}_{n \ge 0, \, k = \overline{1, m}}$ is defined.
\end{df}

Note that, in the case (ii), $\vv_j(\pi, \la_{\diamond}) \ne 0$ for $j \ne s$ and $\dot \vv_s(\pi, \la_{\diamond}) \ne 0$. 
\begin{lem} \label{lem:root}
If $\la_{\diamond}$ is an eigenvalue of the problem $L$ of algebraic multiplicity $m_{\diamond}$ (i.e. the zero of $\Delta(\la)$ of multiplicity $m_{\diamond}$), then $\{ y_{\diamond}^{<\nu>} \}_{\nu = 0}^{m_{\diamond}-1}$ constructed in Definition~\ref{df:y} is the corresponding chain of root functions. This means
\begin{gather} \label{eqr}
-\frac{d^2}{dx^2} y_{\diamond j}^{<0>}(x) = \la_{\diamond} y_{\diamond j}^{<0>}(x), \quad -\frac{d^2}{dx^2} y_{\diamond j}^{<\nu>}(x) = \la_{\diamond} y_{\diamond j}^{<\nu>}(x) + y_{\diamond j}^{<\nu-1>}(x), \quad \nu = \overline{1, m_{\diamond}-1}, \\ \nonumber j = \overline{1, m}, \quad x \in (0, \pi),
\end{gather}
and the conditions \eqref{bc}-\eqref{mcint} are fulfilled for every $y = y^{<\nu>}_{\diamond}$, $\nu = \overline{0, m_{\diamond}-1}$.
\end{lem}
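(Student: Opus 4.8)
The plan is to use that each $\vv_j(x,\la)$ of \eqref{defvv} satisfies the equation \eqref{eqv} and the boundary condition \eqref{bc} \emph{identically in} $\la$, so that the whole lemma reduces to differentiating a few $\la$-identities at $\la=\la_\diamond$. Write $a_j(\la):=\vv_j'(\pi,\la)+\int_0^\pi p_j(x)\vv_j(x,\la)\,dx$, so that $\Delta(\la)=\sum_{j=1}^m a_j(\la)\prod_{l\ne j}\vv_l(\pi,\la)$ by \eqref{Delta}. Two general facts drive every step: the conditions \eqref{bc} and \eqref{cont} are linear and $\la$-independent, hence inherited by all $\la$-derivatives; and $\partial_\la$ sends a solution of $-y''=\la y$ to a generalized eigenfunction, which is the source of the chain relations \eqref{eqr} (with the normalization fixed in Definition~\ref{df:y}). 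The matching condition \eqref{mcint} will be read off from the order of the zero of $\Delta$ at $\la_\diamond$, after observing that evaluation of $\partial_x$ at $\pi$ and integration against $p_j$ commute with $\partial_\la$.

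In case (i), where $\vv_j(\pi,\la_\diamond)\ne0$ for all $j$, I would set $\psi_j(x,\la):=\vv_j(x,\la)/\vv_j(\pi,\la)$, analytic near $\la_\diamond$ and normalized by $\psi_j(\pi,\la)\equiv1$. Differentiating $-\psi_j''=\la\psi_j$ gives \eqref{eqr}; differentiating $\psi_j'(0,\la)-h_j\psi_j(0,\la)\equiv0$ gives \eqref{bc}; and $\psi_j(\pi,\la)\equiv1$ gives \eqref{cont}, the common vertex value being $1$ for $\nu=0$ and $0$ for $\nu\ge1$. For \eqref{mcint}, applying the matching functional to $[\psi_j]_{j=1}^m$ yields $F(\la):=\sum_j a_j(\la)/\vv_j(\pi,\la)=\Delta(\la)/\prod_l\vv_l(\pi,\la)$; since the denominator is analytic and nonzero at $\la_\diamond$, $F$ has a zero of order $m_\diamond$ there, so $F^{(\nu)}(\la_\diamond)=0$ for $\nu=\overline{0,m_\diamond-1}$, which is precisely \eqref{mcint} for $y^{<\nu>}_\diamond$.

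Case (ii) is the crux and the main obstacle. By the Note, exactly one index $s$ has $\vv_s(\pi,\la_\diamond)=0$, and this zero is simple, so $\dot\vv_s(\pi,\la_\diamond)\ne0$. First I would show $a_s(\la_\diamond)=0$: in $\Delta(\la_\diamond)$ only the $j=s$ summand survives (each other carries the factor $\vv_s(\pi,\la_\diamond)$), and $\prod_{l\ne s}\vv_l(\pi,\la_\diamond)\ne0$, so $\Delta(\la_\diamond)=0$ forces it. Hence $y^{<0>}_\diamond$, supported on edge $s$ and proportional to $\vv_s(\cdot,\la_\diamond)$, is a genuine eigenfunction: all its components vanish at $\pi$ (so \eqref{cont} holds) and its matching functional is a multiple of $a_s(\la_\diamond)=0$. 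The delicate part is the higher links, where the $s$-component is built from $\dot\vv_s$, which is not a solution but a generalized eigenfunction obeying $-\dot\vv_s''=\vv_s+\la\dot\vv_s$; this extra inhomogeneity must be carried through the chain relations and, above all, through the matching condition up to order $m_\diamond$. Concretely, I would reduce \eqref{mcint} for $\nu\ge1$ to the vanishing, to order $m_\diamond-1$, of $G(\la):=\sum_{j\ne s}a_j(\la)/\vv_j(\pi,\la)+\dot a_s(\la)/\dot\vv_s(\pi,\la)$, and obtain that order from the factorization $\Delta=\prod_{l\ne s}\vv_l(\pi,\la)\big(a_s(\la)+\vv_s(\pi,\la)\sum_{j\ne s}a_j/\vv_j(\pi,\la)\big)$ together with the simplicity of the zero of $\vv_s(\pi,\cdot)$. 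Checking that the terms produced when $\partial_\la$ meets the factor $\dot\vv_s(\pi,\la)^{-1}$ recombine so as to match both \eqref{eqr} and the normalization of Definition~\ref{df:y} on the degenerate edge is where essentially all the bookkeeping sits.
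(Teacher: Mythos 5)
Your case (i) is correct and is exactly the ``direct calculation'' the paper has in mind (the paper gives no details, saying only that Lemma~\ref{lem:root} ``is proved by direct calculations''): differentiating the $\la$-identities for $\psi_j(x,\la)=\vv_j(x,\la)/\vv_j(\pi,\la)$ yields \eqref{bc} and \eqref{cont}, and the MC \eqref{mcint} follows since $F(\la)=\Delta(\la)/\prod_l\vv_l(\pi,\la)$ vanishes to order $m_\diamond$. One caveat you share with the paper's own Definition~\ref{df:y}: without a $1/\nu!$ normalization, applying $\partial_\la^\nu$ to $-\psi_j''=\la\psi_j$ gives $-\frac{d^2}{dx^2}y^{<\nu>}_{\diamond j}=\la_\diamond y^{<\nu>}_{\diamond j}+\nu\, y^{<\nu-1>}_{\diamond j}$, so \eqref{eqr} holds literally only after inserting factorials; that rescaling is harmless. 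In case (ii), your step $\nu=0$ (including $a_s(\la_\diamond)=0$) and your identification of the matching values for $\nu\ge1$ as $G^{(\nu-1)}(\la_\diamond)$, with $G=\sum_{j\ne s}a_j/\vv_j(\pi,\cdot)+\dot a_s/\dot\vv_s(\pi,\cdot)$, are both right, and $G(\la_\diamond)=0$ does follow from the factorization; so everything through $\nu=1$, hence the whole lemma for $m_\diamond\le 2$, is sound.

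The genuine gap is exactly the bookkeeping you deferred: your claim that $G$ vanishes to order $m_\diamond-1$ fails for $m_\diamond\ge3$. Write $R=a_s+VS$ with $V(\la)=\vv_s(\pi,\la)$ and $S=\sum_{j\ne s}a_j/\vv_j(\pi,\cdot)$, so that $\Delta=R\prod_{l\ne s}\vv_l(\pi,\cdot)$ and $R$ vanishes to order $m_\diamond$ at $\la_\diamond$, while $V(\la_\diamond)=0$, $\dot V(\la_\diamond)\ne0$. If $m_\diamond\ge3$, then at $\la_\diamond$ one gets $a_s=0$, $\dot a_s=-\dot V S$, $\ddot a_s=-\ddot V S-2\dot V\dot S$, and a direct computation gives $\dot G(\la_\diamond)=\dot S+\bigl(\ddot a_s\dot V-\dot a_s\ddot V\bigr)/\dot V^2=\dot S-2\dot S=-\dot S(\la_\diamond)$, which is generically nonzero: the MC for $y^{<2>}_\diamond$ does not follow. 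Consistently, the chain relation also breaks on the degenerate edge: differentiating $-\partial_x^2\bigl(\dot\vv_s/\dot\vv_s(\pi,\cdot)\bigr)=\la\,\dot\vv_s/\dot\vv_s(\pi,\cdot)+\vv_s/\dot\vv_s(\pi,\cdot)$ once in $\la$ gives $-\frac{d^2}{dx^2}y^{<2>}_{\diamond s}=\la_\diamond y^{<2>}_{\diamond s}+2y^{<1>}_{\diamond s}-\frac{\ddot\vv_s(\pi,\la_\diamond)}{\dot\vv_s(\pi,\la_\diamond)}\,y^{<0>}_{\diamond s}$, whereas on edges $j\ne s$ the coefficient of $y^{<1>}$ is $1$ --- so the terms do not ``recombine'' for the literally defined functions (this is in fact a defect of Definition~\ref{df:y} itself at $\nu\ge2$, just as its $y^{<0>}_{\diamond s}$-denominator must be read as $\dot\vv_s(\pi,\la_\diamond)$). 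The repair that completes your plan is to differentiate a single analytic family instead: take $Y_s(x,\la)=\vv_s(x,\la)$ and $Y_j(x,\la)=\vv_s(\pi,\la)\,\vv_j(x,\la)/\vv_j(\pi,\la)$ for $j\ne s$; this satisfies the equation, \eqref{bc} and \eqref{cont} identically in $\la$, its matching functional equals $R(\la)=\Delta(\la)/\prod_{l\ne s}\vv_l(\pi,\la)$, which vanishes to order exactly $m_\diamond$, so $\frac{1}{\nu!}\partial_\la^\nu Y|_{\la=\la_\diamond}$, $\nu=\overline{0,m_\diamond-1}$, is a genuine chain of root functions; it agrees (up to the factor $\dot\vv_s(\pi,\la_\diamond)$) with your and the paper's formulas for $\nu\le1$ but deviates from them at $\nu\ge2$.
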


Lemma~\ref{lem:root} is proved by direct calculations. 

Define the Hilbert space of vector functions
$$
\mathcal H = \{ y = [y_j]_{j = 1}^m \colon y_j \in L_2(0, \pi), \, j = \overline{1, m} \}
$$
with the scalar product
$$
(y, u) = \sum_{j = 1}^m \int_0^{\pi} \overline{y_j(x)} u_j(x) \, dx, \quad
y = [y_j]_{j = 1}^m, \: u = [u_j]_{j = 1}^m.
$$
If $\{ y_{nk} \}_{n \ge 0, \, k = \overline{1, m}}$ are the root functions of $L$, then the condition \eqref{mcint} implies
\begin{equation} \label{prody}
    (p, y_{nk}) = -\sum_{j = 1}^m y_{nk,j}'(\pi), \quad n \ge 0, \, k = \overline{1, m},
\end{equation}
where $p = [\overline{p_j}]_{j = 1}^m$, and $y_{nk,j}$ denotes the $j$-th element of the vector function $y_{nk}$.

Now let us consider the sequence $\{ y_{nk} \}_{n \ge 0, \, k = \overline{1, m}}$ constructed via Definition~\ref{df:y} by using some numbers $\{ \la_{nk} \}_{n \ge 0, \, k = \overline{1,m}}$ satisfying \eqref{asymptla} but not necessarily being the eigenvalues of a certain problem $L$. 

\begin{lem} \label{lem:complete}
The sequence $\{ y_{nk} \}_{n \ge 0, \, k = \overline{1, m}}$ is complete in $\mathcal H$.
\end{lem}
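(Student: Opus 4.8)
The plan is to reduce the orthogonality relations to the vanishing of a single entire function and then exploit its growth. Suppose $u = [u_j]_{j=1}^m \in \mathcal H$ satisfies $(u, y_{nk}) = 0$ for all $n \ge 0$, $k = \overline{1,m}$; I aim to show $u = 0$. Introduce the entire functions
$$ U_j(\la) = \int_0^\pi \overline{u_j(x)}\, \vv_j(x, \la)\, dx, \quad j = \overline{1,m}, \qquad \Phi(\la) = \sum_{j=1}^m U_j(\la) \prod_{\substack{s=1\\ s\ne j}}^m \vv_s(\pi, \la), $$
so that $\Phi$ has exactly the structure of the characteristic function \eqref{Delta} with the terms $\vv_j'(\pi,\la)$ removed and $p_j$ replaced by $\overline{u_j}$. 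Note that $G(\la) := \Phi(\la)/\prod_{s}\vv_s(\pi,\la) = \sum_j\int_0^\pi \overline{u_j}\,\vv_j(x,\la)/\vv_j(\pi,\la)\,dx$.

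First I would check that the orthogonality conditions are equivalent to $\Phi$ vanishing at every value $\la_{\diamond}$ of the sequence with multiplicity at least $m_\diamond$. At a point of type (i) this is immediate: since $\prod_s \vv_s(\pi,\la_\diamond) \ne 0$, one has $(u, y_\diamond^{<\nu>}) = \frac{d^\nu}{d\la^\nu}G(\la)\big|_{\la=\la_\diamond}$, and the nonvanishing analytic factor $\prod_s\vv_s(\pi,\la)$ transfers a zero of order $m_\diamond$ from $G$ to $\Phi$. At a point of type (ii), where $\vv_s(\pi,\la_\diamond)=0$ is a simple zero, the condition $(u, y_\diamond^{<0>})=0$ reads $U_s(\la_\diamond)=0$, which already forces $\Phi(\la_\diamond)=0$ because every term with $j\ne s$ carries the factor $\vv_s(\pi,\la_\diamond)$; the remaining chain conditions, written out as in Definition~\ref{df:y}, raise the order of vanishing of $\Phi$ to $m_\diamond$. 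This bookkeeping at the type-(ii) points is the most delicate part and should be settled by the same direct computation that underlies Lemma~\ref{lem:root}.

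Next I would run a growth argument to conclude $\Phi\equiv 0$. With $\la=\rho^2$, each factor $\vv_s(\pi,\rho^2)$ is of exponential type $\pi$ in $\rho$, while $U_j(\rho^2)=\int_0^\pi\overline{u_j}\,\vv_j(x,\rho^2)\,dx$ is of type $\le\pi$ and, by the Riemann--Lebesgue lemma for $u_j\in L_2(0,\pi)$, satisfies $U_j(\rho^2)=o\big(e^{\pi|\operatorname{Im}\rho|}\big)$ as $\operatorname{Im}\rho\to\pm\iy$. Hence $\Phi(\rho^2)$ is entire of type $\le m\pi$ and $\Phi(\rho^2)=o\big(e^{m\pi|\operatorname{Im}\rho|}\big)$. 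Using the separated asymptotics \eqref{asymptla}, I would construct a comparison entire function $\Delta_0(\la)$ of type exactly $m\pi$ whose zero multiset is precisely $\{\la_{nk}\}$ and which obeys a lower bound $|\Delta_0(\rho^2)|\ge C e^{m\pi|\operatorname{Im}\rho|}$ on a family of expanding circles $|\rho|=n+c$ avoiding the zeros (a standard canonical product, modelled on $\rho\sin(\pi\rho)\cos^{m-1}(\pi\rho)$, whose zeros sit one per integer and $m-1$ per half-integer, matching \eqref{asymptla}). Since $\Phi$ vanishes at all $\la_{nk}$ with the required multiplicities, the quotient $\Psi=\Phi/\Delta_0$ is entire; the type matching makes $\Psi$ bounded on the circles, hence bounded on $\mathbb C$ by the maximum principle, hence constant by Liouville; and the strict decay $\Phi(\rho^2)/\Delta_0(\rho^2)\to 0$ as $\operatorname{Im}\rho\to\pm\iy$ forces this constant to be zero. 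Thus $\Phi\equiv 0$.

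Finally, from $\Phi\equiv 0$ I would recover each $u_j$. Because the $h_j$ are distinct, two functions $\vv_s(\pi,\cdot)$ and $\vv_{s'}(\pi,\cdot)$ share no zeros: subtracting the two equations $\cos(\pi\rho)+h_s\sin(\pi\rho)/\rho=0$ leaves $(h_s-h_{s'})\sin(\pi\rho)/\rho=0$, impossible at a common zero since then $\sin(\pi\rho)=0$ and $\cos(\pi\rho)\ne0$. Hence, evaluating $\Phi\equiv 0$ at any zero $\mu$ of $\vv_s(\pi,\cdot)$ gives $U_s(\mu)\prod_{j\ne s}\vv_j(\pi,\mu)=0$ with a nonzero product, so $U_s$ vanishes at all (infinitely many) zeros of $\vv_s(\pi,\cdot)$; comparing types once more yields $U_s\equiv 0$, i.e. $\int_0^\pi\overline{u_s(x)}\vv_s(x,\la)\,dx=0$ for all $\la$. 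Since $\{\vv_s(\cdot,\la)\}_{\la\in\mathbb C}$ is complete in $L_2(0,\pi)$ (equivalently, the $\la$-Taylor coefficients of $U_s$ encode all moments of $u_s$), we conclude $u_s=0$ for each $s$, whence $u=0$ and the system is complete. The main obstacle is the middle step: establishing the precise multiplicity correspondence at the type-(ii) points together with the construction of $\Delta_0$ and the Phragm\'en--Lindel\"of-type estimate that converts ``$\Phi$ vanishes on $\{\la_{nk}\}$'' into ``$\Phi\equiv 0$''.
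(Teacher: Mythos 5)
Your proof is correct and follows the same architecture as the paper's: your $\Phi(\la)$ is exactly the paper's $H(\la)$ from \eqref{defH}, the multiplicity bookkeeping at type-(i)/(ii) points matches (the paper likewise treats case (ii) only ``analogously''), and your comparison function is precisely the canonical product \eqref{prodD}--\eqref{Dk} that the paper divides by, with the same estimates \eqref{estH}--\eqref{estD} and Liouville step --- indeed the factor $\rho\sin(\pi\rho)$ in your model already gives the $O(1/|\rho|)$ decay of the quotient, so your Riemann--Lebesgue refinement is not even needed. The one genuine divergence is the endgame: after extracting $U_s(\mu_n^{(s)})=0$ at the zeros of $\vv_s(\pi,\cdot)$, the paper simply invokes the completeness of $\{\vv_s(x,\mu_n^{(s)})\}_{n\ge 0}$ in $L_2(0,\pi)$ (Lemma~\ref{lem:vv}) to kill $u_s$, whereas you run a second quotient-and-growth argument to upgrade to $U_s\equiv 0$ --- valid, but note that the entirety of $U_s(\rho^2)/\vv_s(\pi,\rho^2)$ uses the simplicity of these zeros, i.e. condition \eqref{Zh}, which is exactly the paper's standing technical assumption --- and then deduce $u_s=0$ from the totality of $\{\vv_s(\cdot,\la)\}_{\la\in\mathbb C}$. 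Your route trades the Riesz-basis/completeness lemma for an extra Liouville step plus a weaker (easier) density fact; only your parenthetical justification of that fact is loose, since the $\la$-Taylor coefficients of $U_s$ give moments against the combined functions $x^{2k}/(2k)! + h_s\,x^{2k+1}/(2k+1)!$ rather than all monomials. The clean way to finish is the Volterra reduction $U_s(\rho^2)=\int_0^{\pi}\bigl(\overline{u_s}(t)+h_s\int_t^{\pi}\overline{u_s}(x)\,dx\bigr)\cos(\rho t)\,dt$, so that $U_s\equiv 0$ forces $\overline{u_s}(t)+h_s\int_t^{\pi}\overline{u_s}(x)\,dx=0$ a.e., whose only solution is $u_s=0$. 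Both endgames work; the paper's is shorter, yours buys independence from the Riesz-basis machinery of Lemma~\ref{lem:vv} at the cost of one more growth estimate.
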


\begin{proof}
Let $f = [\overline{f_j}]_{j = 1}^m \in \mathcal H$ be such that $(f, y_{nk}) = 0$ for all $n \ge 0$, $k = \overline{1, m}$. For $\la_{nk} = \la_{\diamond}$, this means
$$
\sum_{j = 1}^m \int_0^{\pi} f_j(x) y_{\diamond j}^{<\nu>}(x) \, dx = 0, \quad \nu = \overline{0, m_{\diamond}-1}.
$$
In the case (i) of Definition~\ref{df:y}, this readily implies that the function
$$
G(\la) = \sum_{j = 1}^m \frac{1}{\vv_j(\pi, \la)}\int_0^{\pi} f_j(x) \vv_j(x, \la) \, dx
$$
has the zero $\la_{\diamond}$ of multiplicity at least $m_{\diamond}$. Consequently, $\la_{\diamond}$ is the zero of the function
\begin{equation} \label{defH}
H(\la) = \sum_{j = 1}^m \int_0^{\pi} f_j(x) \vv_j(x, \la) \, dx \prod_{\substack{s = 1 \\ s \ne j}}^{m} \vv_s(\pi, \la)
\end{equation}
of multiplicity at least $m_{\diamond}$. The latter fact can be proved analogously for the case (ii) of Definition~\ref{df:y}. Thus, the function $H(\la)$ has zeros $\{ \la_{nk} \}_{n \ge 0, \, k = \overline{1, m}}$.

Define $\Delta(\la)$ by \eqref{prodD}-\eqref{Dk}:
$$
\Delta(\la) = \pi (\la_{01} - \la) \prod\limits_{n = 1}^{\iy} \dfrac{\la_{n1} - \la}{n^2} \prod_{k = 2}^m \prod\limits_{n = 0}^{\iy} \dfrac{\la_{nk} - \la}{(n + \tfrac{1}{2})^2}.
$$
Clearly, the function $\dfrac{H(\la)}{\Delta(\la)}$ is entire and
\begin{equation} \label{estH}
|H(\la)| \le C \exp\left(m |\mbox{Im}\, \sqrt{\la}| \pi\right).
\end{equation}
The asymptotics of Lemma~\ref{lem:asymptDk} imply the estimate
\begin{equation} \label{estD}
|\Delta(\rho^2)| \ge C|\rho| \exp\left(m |\mbox{Im}\, \rho| \pi\right), \quad |\rho| > \rho^*,
\end{equation}
in the region
$$
G_{\de} = \{ \rho \in \mathbb C \colon |\rho - \tfrac{n}{2}| \ge \de, \, n \in \mathbb Z \}, \quad \de > 0.
$$
Combining \eqref{estH} and \eqref{estD}, we conclude that 
$$
\frac{H(\la)}{\Delta(\la)} = O\left( \frac{1}{\sqrt \la}\right), \quad |\la| \to \iy.
$$
By Liouville's theorem, $H(\la) \equiv 0$.

Fix $j \in \{ 1, \ldots, m \}$. Denote by $\{ \mu_n^{(j)} \}_{n = 0}^{\iy}$ the zeros of $\vv_j(\pi, \la)$. Using \eqref{defH}, we obtain
$$
\int_0^{\pi} f_j(x) \vv_j(x, \mu_n^{(j)}) \, dx = 0, \quad n \ge 0.
$$
In view of Lemma~\ref{lem:vv}, the sequence $\{ \vv_j(x, \mu_n^{(j)}) \}_{n = 0}^{\iy}$ is complete in $L_2(0, \pi)$, so $f_j(x) = 0$ a.e. on $(0, \pi)$. Hence, $f = 0$ in $\mathcal H$, so $\{ y_{nk} \}_{n \ge 0, \, k = \overline{1, m}}$ is complete.
\end{proof}

Lemma~\ref{lem:complete} together with \eqref{prody} imply the uniqueness in Theorem~\ref{thm:char}.

\begin{proof}[Proof of the uniqueness in Theorem~\ref{thm:char}]
Consider two problems $L$ and $\tilde L$ with the coefficients $\{ p_j \}_{j = 1}^m$ and $\{ \tilde p_j \}_{j = 1}^m$, respectively.
Suppose that the eigenvalues of $L$ and $\tilde L$ coincide (counting with multiplicities), that is, $\la_{nk} = \tilde \la_{nk}$, $n \ge 0$, $k = \overline{1, m}$. Taking Definition~\ref{df:y} and Lemma~\ref{lem:root} into account, we conclude that the root functions of $L$ and $\tilde L$ also coincide: $y_{nk} = \tilde y_{nk}$, $n \ge 0$, $k = \overline{1, m}$. The relation \eqref{prody} and the similar relation for $\tilde L$ imply $(p - \tilde p, y_{nk}) = 0$, $n \ge 0$, $k = \overline{1, m}$, where $p = [\overline{p_j}]_{j = 1}^m$, $\tilde p = [\overline{\tilde p_j}]_{j = 1}^m$. Using Lemma~\ref{lem:complete}, we conclude that $p = \tilde p$ in $\mathcal H$, so $p_j(x) = \tilde p_j(x)$ a.e. on $(0, \pi)$, $j = \overline{1, m}$. Thus, the eigenvalues $\{ \la_{nk} \}_{n \ge 0, \, k = \overline{1, m}}$ uniquely specify the coefficients $\{ p_j \}_{j = 1}^m$.
\end{proof}

Define
\begin{equation} \label{defv}
v_{n1} = (-1)^n y_{n1}, \quad v_{nk} = \frac{(-1)^n}{n + \tfrac{1}{2}} y_{nk}, \quad k = \overline{2, m},
\end{equation}

\begin{lem} \label{lem:basis}
The sequence $\{ v_{nk} \}_{n \ge 0, \, k = \overline{1,m}}$ is a Riesz basis in $\mathcal H$.
\end{lem}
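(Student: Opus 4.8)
The plan is to prove that $\{ v_{nk} \}$ is a Riesz basis by showing that it is \emph{quadratically close} to an explicit model Riesz basis and then invoking the completeness already established in Lemma~\ref{lem:complete}. Reading off the limiting behavior from \eqref{asymptla} and \eqref{defvv}, I would set $w_1 = [1]_{j = 1}^m$ and $w_k = \left[ \frac{1}{h_j - z_k} \right]_{j = 1}^m$ for $k = \overline{2, m}$, and define the model vector functions
\begin{equation*}
v_{n1}^0(x) = \cos(nx)\, w_1, \qquad v_{nk}^0(x) = \cos\bigl((n + \tfrac12)x\bigr)\, w_k, \quad k = \overline{2, m}, \quad n \ge 0.
\end{equation*}
Since $\sqrt{\la_{n1}} = n + O(1/n)$ gives $\vv_j(\pi, \la_{n1}) = (-1)^n + O(1/n^2)$, while $\sqrt{\la_{nk}} = n + \tfrac12 + O(1/n)$ gives $\vv_j(\pi, \la_{nk}) = \frac{(-1)^n}{n + 1/2}(h_j - z_k) + O(1/n^2)$, the normalizations $(-1)^n$ and $\frac{(-1)^n}{n + 1/2}$ chosen in \eqref{defv} are exactly the ones making $v_{nk} \to v_{nk}^0$ componentwise with a finite, nonzero limit.

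The first key step is that $\{ v_{nk}^0 \}$ is a Riesz basis of $\mathcal H$. The crucial algebraic fact is the linear independence of $w_1, \ldots, w_m$ in $\mathbb C^m$, which is precisely where the hypothesis $h \in \mathbb C^m_*$ enters: a relation $\sum_k c_k w_k = 0$ forces the rational function $R(t) = c_1 + \sum_{k = 2}^m \frac{c_k}{t - z_k}$ to vanish at the $m$ distinct points $h_1, \ldots, h_m$; clearing denominators gives a polynomial of degree $\le m - 1$ with $m$ roots, so $R \equiv 0$ and all $c_k = 0$ by distinctness of the $z_k$. Writing $\mathbb C^m = \mathbb C w_1 \oplus \mathrm{span}(w_2, \ldots, w_m)$ as a bounded (finite-dimensional) direct sum, $\{ \cos(nx)\, w_1 \}_n$ is an orthogonal basis of $L_2(0,\pi) \otimes \mathbb C w_1$, and $\{ \cos((n + \tfrac12)x)\, w_k \}_{n, \, k \ge 2}$ is a Riesz basis of $L_2(0,\pi) \otimes \mathrm{span}(w_2, \ldots, w_m)$ (tensor product of the orthogonal basis $\{ \cos((n + \tfrac12)x) \}_n$ of $L_2(0,\pi)$ with the finite basis $\{ w_k \}_{k \ge 2}$). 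Their union $\{ v_{nk}^0 \}$ is therefore a Riesz basis of $\mathcal H$.

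The second and main technical step is the quadratic closeness $\sum_{n, k} \| v_{nk} - v_{nk}^0 \|_{\mathcal H}^2 < \iy$. For all but finitely many indices one is in case (i) of Definition~\ref{df:y}, and I would expand $\vv_j(x, \la_{nk})$ and $\vv_j(\pi, \la_{nk})$ via \eqref{asymptla}. For $k = 1$ this yields $v_{n1,j} - \cos(nx) = O(1/n) + O(\varkappa_{n1}/n)$ in $L_2(0,\pi)$, which is square-summable since $\{ \varkappa_{n1} \} \in l_2$. The delicate case is $k = \overline{2, m}$, where $\vv_j(\pi, \la_{nk})$ vanishes to first order; the main obstacle is controlling the division by this small quantity. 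A careful expansion gives
\begin{equation*}
\frac{(-1)^n}{n + \tfrac12} \cdot \frac{1}{\vv_j(\pi, \la_{nk})} = \frac{1}{h_j - z_k}\bigl( 1 + O(\varkappa_{nk}/n) + O(1/n^2) \bigr),
\end{equation*}
so that $\| v_{nk,j} - v_{nk,j}^0 \|_{L_2} = O(1/n) + O(\varkappa_{nk}/n)$, again square-summable. The finitely many exceptional indices falling under case (ii) contribute a finite amount and are harmless.

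Finally, quadratic closeness lets me write the operator $V$ determined by $V v_{nk}^0 = v_{nk}$ (extended through the biorthogonal system of the model Riesz basis) in the form $V = I + K$, where $K$ is Hilbert--Schmidt, hence compact and Fredholm of index $0$. Since $v_{nk}$ are nonzero scalar multiples of $y_{nk}$, Lemma~\ref{lem:complete} gives that $\{ v_{nk} \}$ is complete, i.e.\ $\overline{\Ran V} = \mathcal H$; as $V$ has closed range, it is surjective, and being of index $0$ it is then boundedly invertible. Thus $\{ v_{nk} \} = \{ V v_{nk}^0 \}$ is the image of a Riesz basis under a bounded invertible operator, hence a Riesz basis of $\mathcal H$. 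I expect the asymptotic bookkeeping of the previous step, and in particular the near-vanishing denominators for $k \ge 2$, to be the part requiring the most care.
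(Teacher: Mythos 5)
Your proof is correct and follows essentially the same route as the paper: the same model sequence $\{ v_{nk}^0 \}$, quadratic closeness obtained from the asymptotics \eqref{asymptla}, and completeness from Lemma~\ref{lem:complete}, combined exactly as in Proposition~\ref{prop:Riesz} (which your $V = I + K$ Fredholm argument simply reproves by hand). The only internal difference is that you establish the Riesz-basis property of $\{ v_{nk}^0 \}$ via the direct-sum decomposition $\mathbb{C}^m = \mathbb{C} w_1 \oplus \mathrm{span}(w_2, \dots, w_m)$ with an explicit rational-function proof of linear independence, whereas the paper computes the Gram matrix blockwise (its orthogonality relations $(v_{n1}^0, v_{nk}^0) = 0$ implicitly using the identity $\sum_{j = 1}^m (h_j - z_k)^{-1} = 0$) and invokes the nonvanishing of the Gram determinant --- both verifications are sound and rest on the same hypothesis that the numbers $\{ h_j \}_{j = 1}^m \cup \{ z_k \}_{k = 2}^m$ are distinct.
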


Necessary information about Riesz bases can be found in \cite[Section~1.8.5]{FY01} and in \cite{Christ03}. In particular, the proof of Lemma~\ref{lem:basis} is based on the following proposition (Proposition~1.8.5 from \cite{FY01}).

\begin{prop} \label{prop:Riesz}
Suppose that a sequence $\{ f_n \}_{n \ge 1}$ is complete in a Hilbert space $B$ and quadratically close to some Riesz basis $\{ g_n \}_{n \ge 1}$ in $B$, that is,
$$
\sum_{n = 1}^{\iy} \| f_n - g_n \|^2_B < \infty. 
$$
Then $\{ f_n \}_{n = 1}^{\iy}$ is also a Riesz basis in $B$.
\end{prop}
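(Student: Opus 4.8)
The statement is the classical Bari-type stability theorem for Riesz bases, and the plan is to realize the reference basis $\{g_n\}$ as the image of an orthonormal basis under a bounded invertible operator, absorb the discrepancy $f_n - g_n$ into a \emph{compact} (in fact Hilbert--Schmidt) perturbation, and then invoke the Fredholm alternative together with the completeness hypothesis to conclude that the perturbed operator is still boundedly invertible. Once that operator is shown invertible, $\{f_n\}$ is exhibited as the image of an orthonormal basis under a bounded invertible operator, which is precisely the definition of a Riesz basis.

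First I would fix, by the very definition of a Riesz basis, an orthonormal basis $\{ e_n \}_{n \ge 1}$ of $B$ and a bounded invertible operator $T \colon B \to B$ such that $g_n = T e_n$ for all $n$. Next I would define a linear operator $K$ on $B$ by $K e_n = f_n - g_n$, extended by linearity and continuity. Since $\{ e_n \}$ is orthonormal, one has $\sum_{n \ge 1} \| K e_n \|_B^2 = \sum_{n \ge 1} \| f_n - g_n \|_B^2$, which is finite by hypothesis; hence $K$ is Hilbert--Schmidt, in particular bounded and compact. This is the only place where the quadratic-closeness assumption is used, and it is a routine verification. Setting $U := T + K$, we then get $U e_n = T e_n + K e_n = g_n + (f_n - g_n) = f_n$ for every $n$, so $\{ f_n \} = \{ U e_n \}$, and the entire problem reduces to proving that $U$ is boundedly invertible.

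The main obstacle will be precisely the invertibility of $U$. To handle it I would factor $U = T ( I + T^{-1} K )$; since $T^{-1}$ is bounded and $K$ is compact, $T^{-1} K$ is compact, so $I + T^{-1} K$ is Fredholm of index $0$, and therefore $U$ is Fredholm of index $0$ with closed range. Upgrading ``index $0$'' to ``invertible'' is the delicate step, and here the completeness hypothesis is indispensable: because $U e_n = f_n$, the range $\Ran U$ contains $\operatorname{span}\{ f_n \}$, whose closure is all of $B$; thus $\Ran U$ is dense, and being also closed it equals $B$, so $U$ is surjective. A surjective Fredholm operator of index $0$ must have $\Ker U = \{ 0 \}$, hence $U$ is bijective. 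By the open mapping theorem $U^{-1}$ is bounded, so $U$ is bounded invertible, and consequently $\{ f_n \} = \{ U e_n \}$ is a Riesz basis in $B$, as claimed.
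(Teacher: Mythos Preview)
Your argument is correct and is the standard proof of this Bari-type stability result: represent the Riesz basis as $g_n = T e_n$ with $T$ bounded invertible and $\{e_n\}$ orthonormal, absorb the quadratic perturbation into a Hilbert--Schmidt (hence compact) operator $K$ with $K e_n = f_n - g_n$, and use Fredholm theory plus the completeness of $\{f_n\}$ to show $U = T + K$ is boundedly invertible. Each step is sound; in particular, the passage from ``dense range'' to ``surjective'' via closed range, and from ``surjective index-zero Fredholm'' to ``injective'', is exactly right.

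As for comparison with the paper: the paper does not prove this proposition at all. It is stated as Proposition~1.8.5 from \cite{FY01} and used as a black box in the proof of Lemma~\ref{lem:basis}. So there is nothing to compare your approach against within the paper itself; you have simply supplied a clean proof of a result the author imported from the literature.
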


\begin{proof}[Proof of Lemma~\ref{lem:basis}]
By virtue of Lemma~\ref{lem:complete} and \eqref{defv}, the sequence $\{ v_{nk} \}_{n \ge 0, \, k = \overline{1, m}}$ is complete in $\mathcal H$. 
In view of the asymptotics \eqref{asymptla} and Definition~\ref{df:y}, for sufficiently large $n$, the values $\{ \la_{nk} \}$ are simple, so
\begin{equation} \label{vvv}
v_{n1,j}(x) = (-1)^n \frac{\vv_j(x, \la_{n1})}{\vv_j(\pi, \la_{n1})}, \quad 
v_{nk,j}(x) = \frac{(-1)^n \vv_j(x, \la_{nk})}{(n + \tfrac{1}{2}) \vv_j(\pi, \la_{nk})}, \quad k = \overline{2, m}, \quad j = \overline{1, m}.
\end{equation}
Therefore, using \eqref{asymptla} and \eqref{defvv}, one can easily show that the sequence $\{ v_{nk} \}_{n \ge 0, \, k = \overline{1, m}}$ is
quadratically close to the sequence $\{ v_{nk}^0 \}_{n \ge 0, \, k = \overline{1, m}}$:
$$
v_{n1}^0 = [\cos(nx)]_{j = 1}^m, \quad v_{nk}^0 = \left[(h_j - z_k)^{-1} \cos((n + \tfrac{1}{2})x)\right]_{j = 1}^m, \quad k = \overline{2, m}.
$$

Let us show that $\{ v_{nk}^0 \}_{n \ge 0, \, k = \overline{1, m}}$ is a Riesz basis in $\mathcal H$.
It is easy to check that
\begin{gather*}
0 < c_1 \le \| v_{nk}^0 \| \le c_2, \quad n \ge 0, \, k = \overline{1, m}, \\
(v^0_{nk}, v^0_{ls}) = 0, \quad n \ne l, \qquad (v^0_{n1}, v^0_{nk}) = 0, \quad k = \overline{2, m}, \\
(v^0_{nk}, v^0_{ns}) = \frac{2}{\pi} a_{ks}, \quad a_{ks} = \sum_{j = 1}^m \frac{1}{\overline{(h_j - z_k)} (h_j - z_s)}, \quad k, s = \overline{2, m}.
\end{gather*}
Clearly, $A = [a_{ks}]_{k, s = 2}^m$ is the Gram matrix of the vectors $f_k = [(h_j - z_k)]_{j = 1}^m$, $k = \overline{2, m}$. Since the numbers $\{ h_j \}_{j = 1}^m \cup \{ z_k \}_{k = 2}^m$ are all distinct, then it can be shown that the vectors $\{ f_k \}_{k = 2}^m$ are linearly independent. Hence, $\det A \ne 0$. Consequently, $\{ v_{nk}^0 \}_{n \ge 0, \, k = \overline{1, m}}$ is a Riesz basis.
This together with Proposition~\ref{prop:Riesz} yield the claim.
\end{proof}

If the vector functions $\{ v_{nk} \}_{n \ge 0, \, k = \overline{1, m}}$ are constructed by using the eigenvalues $\{ \la_{nk} \}_{n \ge  0, \, k = \overline{1, m}}$ of a certain eigenvalue problem $L$, then,
using \eqref{prody} and \eqref{defv}, we obtain
\begin{equation} \label{prodv}
    (p, v_{nk}) = \eta_{nk}, \quad n \ge 0, \, k = \overline{1, m},
\end{equation}
where
\begin{equation} \label{defeta}
\eta_{nk} := -\sum_{j = 1}^m v_{nk, j}'(\pi), \quad n \ge 0, \quad k = \overline{1, m}.
\end{equation}

\begin{lem} \label{lem:eta}
The sequence $\{ \eta_{nk} \}$ defined by \eqref{defeta} belongs to $l_2$.
\end{lem}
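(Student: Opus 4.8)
The plan is to work with large indices $n$, where the eigenvalues $\la_{nk}$ are simple and the explicit formulas \eqref{vvv} hold; the finitely many remaining terms cannot affect membership in $l_2$. Substituting \eqref{vvv} into \eqref{defeta} and differentiating in $x$, one obtains for large $n$
\[
\eta_{n1} = -(-1)^n \sum_{j = 1}^m \frac{\vv_j'(\pi, \la_{n1})}{\vv_j(\pi, \la_{n1})}, \qquad
\eta_{nk} = -\frac{(-1)^n}{n + \tfrac12} \sum_{j = 1}^m \frac{\vv_j'(\pi, \la_{nk})}{\vv_j(\pi, \la_{nk})}, \quad k = \overline{2, m}.
\]
Thus the whole estimate is governed by the logarithmic derivatives $\vv_j'(\pi, \la_{nk})/\vv_j(\pi, \la_{nk})$, which I would expand by inserting the explicit expression \eqref{defvv} with $\rho = \sqrt{\la}$ together with the eigenvalue asymptotics \eqref{asymptla}.

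For $k = 1$, writing $\sqrt{\la_{n1}} = n + \tfrac{z_1}{\pi n} + \tfrac{\varkappa_{n1}}{n}$ and expanding $\cos(\rho\pi)$, $\sin(\rho\pi)$, I get $\vv_j(\pi, \la_{n1}) = (-1)^n(1 + O(n^{-2}))$ and $\vv_j'(\pi, \la_{n1}) = (-1)^{n+1}((z_1 - h_j) + \pi \varkappa_{n1} + O(n^{-2}))$, hence
\[
\frac{\vv_j'(\pi, \la_{n1})}{\vv_j(\pi, \la_{n1})} = -(z_1 - h_j) + \varkappa_{nj}', \qquad \{ \varkappa_{nj}' \} \in l_2 .
\]
Summing over $j$, the constant part is $-\sum_{j=1}^m (z_1 - h_j) = -(m z_1 - \sum_{j=1}^m h_j) = 0$ by the very definition $z_1 = \tfrac1m \sum_{j=1}^m h_j$. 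Hence $\eta_{n1} = (-1)^{n+1} \sum_{j=1}^m \varkappa_{nj}'$ is a finite sum of $l_2$-sequences, so $\{ \eta_{n1} \} \in l_2$.

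The genuinely delicate case is $k = \overline{2, m}$. Put $t_n = n + \tfrac12$ and $\sqrt{\la_{nk}} = t_n + \sigma_n$ with $\sigma_n = \tfrac{z_k}{\pi t_n} + \tfrac{\varkappa_{nk}}{n^2}$. Now $\vv_j(\pi, \la_{nk})$ is only of size $O(n^{-1})$, so the ratio is of size $t_n^2 = O(n^2)$ and the naive size of $\eta_{nk}$ is $O(n)$; two separate cancellations are needed. A careful expansion gives, for each $j$,
\[
\frac{\vv_j'(\pi, \la_{nk})}{\vv_j(\pi, \la_{nk})} = -\frac{t_n^2}{h_j - z_k} + \frac{t_n^2}{n}\, b_{nj} + O(1), \qquad \{ b_{nj} \} \in l_2 ,
\]
where I use that $h_j \ne z_k$ (guaranteed by $h \in \mathbb C^m_*$) to keep the denominator bounded away from zero, and where the $l_2$-sequence $b_{nj}$ arises precisely from the remainder $\varkappa_{nk}$ in \eqref{asymptla}. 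This is exactly the place where the sharper remainder $\varkappa_{nk}/n^2$ (rather than $\varkappa_{nk}/n$) is indispensable: a weaker remainder would make $b_{nj}$ grow like $n \varkappa_{nk}$ and destroy the $l_2$-property. The leading term, summed over $j$, equals $-t_n^2 \sum_{j=1}^m (h_j - z_k)^{-1}$, and the crucial algebraic fact is that this sum vanishes. Indeed, $P(z) = \frac{d}{dz}\prod_{j=1}^m (z - h_j) = \big(\prod_{j=1}^m (z - h_j)\big)\sum_{j=1}^m (z - h_j)^{-1}$, and since $z_k$ is a root of $P$ with $\prod_{j=1}^m (z_k - h_j) \ne 0$, we get $\sum_{j=1}^m (z_k - h_j)^{-1} = 0$.

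Putting these together, the $O(n^2)$ term cancels and
\[
\eta_{nk} = -\frac{(-1)^n}{t_n}\sum_{j=1}^m \frac{\vv_j'(\pi, \la_{nk})}{\vv_j(\pi, \la_{nk})} = (-1)^{n+1}\frac{t_n}{n}\sum_{j=1}^m b_{nj} + O(n^{-1}).
\]
Since $t_n/n$ is bounded, $\sum_{j=1}^m b_{nj}$ is a finite sum of $l_2$-sequences, and $\{ O(n^{-1}) \} \in l_2$, I conclude $\{ \eta_{nk} \} \in l_2$ for every $k = \overline{2, m}$, which completes the proof. The main obstacle is this last case: one must verify the $O(n^2)$ cancellation coming from the criticality of $z_k$ and then confirm that the surviving $O(n)$ contribution is genuinely square-summable, which hinges on $\varkappa_{nk} \in l_2$ entering linearly with bounded coefficients so that the dangerous factor $t_n^2/n$ is tamed by the prefactor $1/t_n$ into the bounded factor $t_n/n$. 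Tracking the remainders to the correct order is the bulk of the technical work.
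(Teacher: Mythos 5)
Your proposal is correct and follows essentially the same route as the paper's proof: both reduce $\eta_{nk}$ for large $n$ to the logarithmic derivatives $\vv_j'(\pi,\la_{nk})/\vv_j(\pi,\la_{nk})$ via \eqref{vvv}, expand using \eqref{defvv} and \eqref{asymptla}, and cancel the leading terms through the identities $z_1 = \frac{1}{m}\sum_{j=1}^m h_j$ for $k = 1$ and $\sum_{j=1}^m (z_k - h_j)^{-1} = P(z_k)\prod_{j=1}^m (z_k - h_j)^{-1} = 0$ for $k = \overline{2,m}$. Your explicit observation that the remainder $\varkappa_{nk}/n^2$ (rather than $\varkappa_{nk}/n$) is what tames the factor $n + \tfrac{1}{2}$ in the $k \ge 2$ case is precisely the mechanism implicit in the paper's final estimate $\eta_{nk} = (-1)^{n+1}(n+\tfrac{1}{2})\sum_{j=1}^m \bigl((z_k - h_j)^{-1} + \varkappa_n/n\bigr)$.
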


\begin{proof}
It follows from \eqref{vvv}, that
\begin{equation} \label{vt}
v'_{n1,j}(\pi) = (-1)^n \frac{\vv_j'(\pi, \la_{n1})}{\vv_j(\pi, \la_{n1})}, \quad 
v'_{nk,j}(\pi) = \frac{(-1)^n \vv_j'(\pi, \la_{nk})}{(n + \tfrac{1}{2}) \vv_j(\pi, \la_{nk})}, \quad k = \overline{2, m}, \quad j = \overline{1, m},
\end{equation}
for sufficiently large $n$. Using \eqref{defvv} and \eqref{asymptla}, we obtain
\begin{align*}
    & \vv_j'(\pi, \la_{n1}) = (-1)^n (h_j - z_1 + \varkappa_n), \quad \vv_j(\pi, \la_{n1}) = (-1)^n + O(n^{-1}), \\
    & \vv_j'(\pi, \la_{nk}) = (-1)^{n + 1} (n + \tfrac{1}{2}) + O(n^{-1}), \quad
    \vv_j(\pi, \la_{nk}) = \frac{(-1)^n}{(n + \tfrac{1}{2})} (h_j - z_k + \tfrac{\varkappa_n}{n}), \quad k = \overline{2, m}.
\end{align*}
Substituting these asymptotics into \eqref{vt}, \eqref{defeta}, we get
\begin{align*}
& \eta_{n1} = (-1)^{n+1} \left( \sum_{j = 1}^m h_j - m z_1 \right) + \varkappa_n, \\
& \eta_{nk} = (-1)^{n + 1} (n + \tfrac{1}{2}) \sum_{j = 1}^m \left( \frac{1}{z_k - h_j} + \frac{\varkappa_n}{n}\right), \quad k = \overline{2, m}.
\end{align*}
Recall that $z_1 = \frac{1}{m} \sum\limits_{j = 1}^m h_j$ and $\{ z_k \}_{k = 2}^m$ are the roots of the polynomial $P(z)$ defined by \eqref{defP}, so
$$
\sum_{j = 1}^m \frac{1}{z_k - h_j} = P(z_k) \prod_{j =1}^m \frac{1}{z_k - h_j} = 0.
$$
Hence $\{ \eta_{nk} \} \in l_2$.
\end{proof}

\begin{proof}[Proof of the sufficiency in Theorem~\ref{thm:char}]
Suppose that $h \in \mathbb C^m_*$ and $\{ \la_{nk} \}_{n \ge 0, \, k = \overline{1, m}}$ are arbitrary complex numbers satisfying \eqref{asymptla}. Construct vector functions $\{ y_{nk} \}_{n \ge 0, \, k = \overline{1, m}}$ by Definition~\ref{df:y} and then
$\{ v_{nk} \}_{n \ge 0, \, k = \overline{1, m}}$ by \eqref{defv}. 
By Lemma~\ref{lem:basis}, $\{ v_{nk} \}_{n \ge 0, \, k = \overline{1, m}}$ is a Riesz basis in $\mathcal H$.
Define $\eta_{nk}$ by \eqref{defeta}.
By virtue of Lemma~\ref{lem:eta}, $\{ \eta_{nk} \} \in l_2$.
Hence, there exists a unique vector function $p \in \mathcal H$, $p = [\overline{p_j}]_{j = 1}^m$, satisfying \eqref{prodv}. Let $L$ be the problem \eqref{eqv}-\eqref{mcint} with these functions $\{ p_j \}_{j = 1}^m$ in the MC \eqref{mcint}. It remains to prove that $\{ \la_{nk} \}_{n \ge 0, \, k = \overline{1, m}}$ are the eigenvalues of $L$.

The relations \eqref{prodv} and \eqref{defv} imply \eqref{prody}, so each $y = y_{nk}$ satisfies \eqref{mcint}. Using Definition~\ref{df:y}, it is easy to check that each $y_{nk}$ satisfies \eqref{bc}, \eqref{cont} and that \eqref{eqr} is valid. Combining the above arguments, we conclude that $\{ \la_{nk} \}_{n \ge 0, \, k = \overline{1, m}}$ are the eigenvalues and
$\{ y_{nk} \}_{n \ge 0, \, k = \overline{1, m}}$ are the corresponding root functions of the problem $L$.
\end{proof}

The described proof is constructive and yields the following algorithm for solving IP~\ref{ip:main}.

\begin{alg} \label{alg:dif}
Suppose that $h = [h_j]_{j = 1}^m \in \mathbb C^m_*$ and complex numbers $\{ \la_{nk} \}_{n \ge 0, \, k = \overline{1, m}}$ satisfying \eqref{asymptla} are given. We have to find $\{ p_j \}_{j = 1}^m$.
\begin{enumerate}
    \item Construct $\{ y_{nk} \}_{n \ge 0, \, k = \overline{1, m}}$ by Definition~\ref{df:y}.
    \item Find $\{ v_{nk} \}_{n \ge 0, \, k = \overline{1, m}}$ by \eqref{defv} 
    \item Construct the biorthonormal basis $\{ v_{nk}^* \}_{n \ge 0, \, k = \overline{1, m}}$ such that
    $$
        (v_{nk}, v_{ls}^*) = \begin{cases}
                                1, \quad (n,k) = (l, s), \\
                                0, \quad \text{otherwise}.
                            \end{cases}
    $$
    \item Find $\{ \eta_{nk} \}_{n \ge 0, \, k = \overline{1, m}}$ by \eqref{defeta}.
    \item Find $p \in \mathcal H$, $p = [\overline{p_j}]_{j= 1}^m$ satisfying \eqref{prodv} by
    $$
        p = \sum_{n = 0}^{\iy} \sum_{k = 1}^m \overline{\eta_{nk}} v^*_{nk}.
    $$
\end{enumerate}
\end{alg}

\section{Second method} \label{sec:easy}

Theorem~\ref{thm:char} can be proved easier by another method. Define the functions
\begin{align} \label{Delta0}
    & \Delta_0(\la) := \sum_{j = 1}^m \vv_j'(\pi, \la) \prod_{\substack{s = 1\\ s \ne j}}^m \vv_s(\pi, \la), \\ \label{hDelta}
    & \hat \Delta(\la) := \Delta(\la) - \Delta_0(\la) = \sum_{j = 1}^m \int_0^{\pi}p_j(x)\vv_j(x, \la)\, dx \prod_{\substack{s = 1\\ s \ne j}}^m \vv_s(\pi, \la),
\end{align}
where $\Delta(\la)$ was defined in \eqref{Delta}.

For each $j = \overline{1, m}$, consider the zeros $\{ \mu_n^{(j)} \}_{n = 0}^{\iy}$ of the function $\vv_j(\pi, \la)$ defined by \eqref{defvv}. Under our restrictions, $\vv_s(\pi, \mu_n^{(j)}) \ne 0$ for $s \ne j$. Hence
\begin{gather} \nonumber
\hat \Delta(\mu_n^{(j)}) = \int_0^{\pi} p_j(x) \vv_j(x, \mu_n^{(j)}) \, dx \prod_{\substack{s = 1\\ s \ne j}}^m \vv_s(\pi, \mu_n^{(j)}), \\ \label{prodvv}
\int_0^{\pi} p_j(x) \vv_j(x, \mu_n^{(j)}) \,dx = \chi_{n}^{(j)}, \quad j = \overline{1, m}, \quad n \ge 0, \\ \label{defchi}
\chi_{n}^{(j)} := \hat \Delta(\mu_n^{(j)}) \prod_{\substack{s = 1\\ s \ne j}}^m \vv_s^{-1}(\pi, \mu_n^{(j)}).
\end{gather}

Assume \eqref{Zh} for $j = \overline{1, m}$.
By Lemma~\ref{lem:vv},
for each fixed $j$, the sequence $\{ \vv_j(x, \mu_n^{(j)}) \}_{n = 0}^{\iy}$ is a Riesz basis in $L_2(0, \pi)$. Therefore, equations \eqref{prodvv} can be used to determine $p_j$. Thus, we arrive at the following algorithm for the IP solution.

\begin{alg} \label{alg:easy}
Suppose that $h = [h_j]_{j = 1}^m \in \mathbb C^m_*$ and complex numbers $\{ \la_{nk} \}_{n \ge 0, \, k = \overline{1, m}}$ satisfying \eqref{asymptla} are given. We have to find $\{ p_j \}_{j = 1}^m$.

\begin{enumerate}
    \item Construct $\Delta(\la)$ by \eqref{prodD}-\eqref{Dk}:
\begin{equation} \label{smD}
\Delta(\la) = \pi (\la_{01} - \la) \prod\limits_{n = 1}^{\iy} \dfrac{\la_{n1} - \la}{n^2} \prod_{k = 2}^m \prod\limits_{n = 0}^{\iy} \dfrac{\la_{nk} - \la}{(n + \tfrac{1}{2})^2}.
\end{equation}
    \item Find $\Delta_0(\la)$ by \eqref{Delta0} and $\hat \Delta(\la) = \Delta(\la) - \Delta_0(\la)$.
    \item For each $j = \overline{1, m}$, implement the following steps 4--7.
    \item Find the zeros $\{ \mu_n^{(j)} \}_{n = 0}^{\iy}$ of $\vv_j(\pi, \la)$.
    \item Find the sequence $\{ w_{n}^{(j)} \}_{n = 0}^{\iy}$ biorthonormal to $\{ \vv_j(x, \mu_n^{(j)}) \}_{n = 0}^{\iy}$, that is,
    $$
        \int_0^{\pi} \overline{w_n^{(j)}(x)} \vv_j(x, \mu_k^{(j)}) \, dx = \begin{cases}
            1, \quad n = k, \\
            0, \quad \text{otherwise}.
        \end{cases}
    $$
    \item Find $\{ \chi_n^{(j)} \}_{n = 0}^{\iy}$ by \eqref{defchi}.
    \item Find $p_j \in L_2(0, \pi)$ satisfying \eqref{prodvv} by the formula
    \begin{equation} \label{pj}
        p_j(x) = \sum_{n = 0}^{\iy} \chi_n^{(j)} w_n^{(j)}(x).
    \end{equation}
\end{enumerate}
\end{alg}

In contrast to Algorithm~\ref{alg:dif}, Algorithm~\ref{alg:easy} finds the functions $\{ p_j \}_{j = 1}^m$ separately for each~$j$.

Below, we provide the proof of sufficiency and uniqueness in Theorem~\ref{thm:char} based on Algorithm~\ref{alg:easy}. First, we need the following technical lemma.

\begin{lem} \label{lem:chi}
The sequence $\{ \chi_n^{(j)} \}_{n = 0}^{\iy}$ defined by \eqref{defchi} belongs to $l_2$.
\end{lem}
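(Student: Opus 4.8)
The plan is to evaluate the three quantities entering the definition \eqref{defchi} of $\chi_n^{(j)}$ asymptotically and to exhibit the exact cancellation of their non-$l_2$ parts. Fix $j$ and write $\rho_n = \sqrt{\mu_n^{(j)}}$. The first move is a purely algebraic simplification of $\Delta_0$ at the node $\mu_n^{(j)}$: since $\vv_j(\pi,\mu_n^{(j)}) = 0$, every summand with index $i \ne j$ in $\Delta_0(\la) = \sum_i \vv_i'(\pi,\la)\prod_{s\ne i}\vv_s(\pi,\la)$ carries the vanishing factor $\vv_j(\pi,\mu_n^{(j)})$, so only the term $i=j$ survives. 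Hence $\Delta_0(\mu_n^{(j)}) = \vv_j'(\pi,\mu_n^{(j)})\prod_{s\ne j}\vv_s(\pi,\mu_n^{(j)})$, and from $\hat\Delta = \Delta - \Delta_0$ one obtains
\[
\chi_n^{(j)} = \Delta(\mu_n^{(j)})\prod_{s\ne j}\vv_s^{-1}(\pi,\mu_n^{(j)}) - \vv_j'(\pi,\mu_n^{(j)}).
\]
Thus it suffices to show that the first term on the right reproduces $\vv_j'(\pi,\mu_n^{(j)})$ modulo an $l_2$ sequence.

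Next I would insert the asymptotics. From Lemma~\ref{lem:vv}, $\rho_n = n + \tfrac12 + \tfrac{h_j}{\pi n} + O(n^{-2})$, together with the defining relation $\cos(\rho_n\pi) = -\tfrac{h_j}{\rho_n}\sin(\rho_n\pi)$. A direct expansion via \eqref{defvv} gives, for $s\ne j$, $\vv_s(\pi,\mu_n^{(j)}) = (-1)^n\tfrac{h_s - h_j}{n} + O(n^{-2})$, so that $\prod_{s\ne j}\vv_s^{-1}(\pi,\mu_n^{(j)}) = O(n^{m-1})$, and $\vv_j'(\pi,\mu_n^{(j)}) = -(-1)^n\rho_n + O(n^{-1})$. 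Using the factorization $\Delta = \prod_k\Delta_k$ (Lemma~\ref{lem:Delta}) with the representations of Lemma~\ref{lem:asymptDk}, I would compute $\Delta_k(\mu_n^{(j)}) = (-1)^n\tfrac{z_k - h_j}{n} + O(n^{-2})$ for $k = \overline{2,m}$, while $\Delta_1(\mu_n^{(j)})$ is of order $n$ with leading part a fixed multiple of $(-1)^n\rho_n$. Forming the product of the $m-1$ small factors and dividing by $\prod_{s\ne j}\vv_s(\pi,\mu_n^{(j)})$ then reduces the whole expression, to leading order, to the ratio $\prod_{k=2}^m (z_k - h_j)\big/\prod_{s\ne j}(h_s - h_j)$ times $\Delta_1(\mu_n^{(j)})$.

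The cancellation of the growing part now rests on the algebraic identity
\[
\prod_{k=2}^m (z_k - h_j) = \frac{1}{m}\prod_{s\ne j}(h_s - h_j),
\]
obtained by writing $P(z) = m\prod_{k=2}^m (z - z_k)$ and comparing its value at $z = h_j$ with $P(h_j) = \tfrac{d}{dz}\prod_i (z-h_i)\big|_{z=h_j} = \prod_{i\ne j}(h_j - h_i)$. Combined with the normalization of $\Delta$ recorded in Lemma~\ref{lem:Delta}, this forces the growing part of $\Delta(\mu_n^{(j)})\prod_{s\ne j}\vv_s^{-1}$ to coincide with the leading part $-(-1)^n\rho_n$ of $\vv_j'(\pi,\mu_n^{(j)})$, so these terms cancel in the displayed formula for $\chi_n^{(j)}$. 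The surviving remainder is of $l_2$ type: the contributions $\int_0^\pi \mathscr K(t)\cos(\rho_n t)\,dt$ and $\int_0^\pi \mathscr N_k(t)\cos(\rho_n t)\,dt$ produce $l_2$ sequences, because $\{\cos(\rho_n t)\}$ is quadratically close to the orthogonal system $\{\cos((n+\tfrac12)t)\}$ in $L_2(0,\pi)$ and Bessel's inequality applies to the $L_2$ kernels $\mathscr K, \mathscr N_k$; the residual $O(n^{-1})$ errors lie trivially in $l_2$.

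The main obstacle I anticipate is the bookkeeping of this cancellation through the product of $m$ factors. Because $\prod_{s\ne j}\vv_s^{-1}(\pi,\mu_n^{(j)}) = O(n^{m-1})$, a mismatch not only at the leading order $O(n)$ but already at the order $O(1)$ would destroy membership in $l_2$; hence one must track the $O(1)$ corrections inside each $\Delta_k(\mu_n^{(j)})$ and each $\vv_s(\pi,\mu_n^{(j)})$ and verify that they too collapse after the algebraic identity is applied. Getting the multiplicative constant exactly right—ensuring that the normalization of the product $\prod_k\Delta_k$ genuinely reproduces the leading coefficient of the characteristic function (which carries the factor $m$ coming from the $m$ summands of \eqref{Delta}), so that the $1/m$ of the identity is compensated—is the delicate point, as is confirming that all remainder terms remain $l_2$ when evaluated at the perturbed nodes $\mu_n^{(j)}$ rather than at the unperturbed half-integers.
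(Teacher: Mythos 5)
Your proposal is correct and, in skeleton, takes the same route as the paper: evaluate $\hat\Delta(\mu_n^{(j)})$ asymptotically through the product representation of $\Delta$ (Lemmas~\ref{lem:Delta} and~\ref{lem:asymptDk}), show that the non-$l_2$ part cancels against $\Delta_0(\mu_n^{(j)})$, absorb the kernel terms $\int_0^\pi\mathscr K(t)\cos(\rho_n t)\,dt$ and $\int_0^\pi\mathscr N_k(t)\cos(\rho_n t)\,dt$ by Bessel's inequality, and conclude $\chi_n^{(j)}=\hat\Delta(\mu_n^{(j)})\cdot O(n^{m-1})=\varkappa_n$. Where you genuinely differ is at the cancellation step, which the paper dispatches in a single sentence (``the similar asymptotic formula is valid for $\Delta_0$'')---implicitly because $\Delta_0$ is itself a characteristic function of form \eqref{genD} with $p_j\equiv 0$ and hence carries the same constants $\{z_k\}$---whereas you evaluate $\Delta_0(\mu_n^{(j)})=\vv_j'(\pi,\mu_n^{(j)})\prod_{s\ne j}\vv_s(\pi,\mu_n^{(j)})$ directly (only the $j$-th summand survives) and verify the matching of the leading constants through the identity
\begin{equation*}
m\prod_{k=2}^m(z_k-h_j)=\prod_{s\ne j}(h_s-h_j),
\end{equation*}
obtained from $P(z)=m\prod_{k=2}^m(z-z_k)$ and $P(h_j)=\prod_{i\ne j}(h_j-h_i)$. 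This explicitness buys something real: it surfaces the normalization subtlety that \eqref{prodD}, as printed, is off by the constant factor $m$ (the characteristic function \eqref{Delta} behaves like $-m\rho\sin(\rho\pi)\cos^{m-1}(\rho\pi)$ along rays, while $\prod_k\Delta_k$ has leading coefficient $1$)---a discrepancy that is harmless for locating zeros but is exactly what compensates the $1/m$ in your identity, i.e., the delicate point you correctly flag. One caution on your displayed precision: the expansions you state, e.g.\ $\vv_s(\pi,\mu_n^{(j)})=(-1)^n\tfrac{h_s-h_j}{n}+O(n^{-2})$, carry only $O(n^{-1})$ relative accuracy, which by itself is insufficient---an unmatched deterministic relative error $c/n$ in the ratio would leave $\chi_n^{(j)}$ bounded away from zero, hence outside $l_2$, as you anticipate. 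The refinement is available and is what the paper uses in \eqref{sm1}: from $\sqrt{\mu_n^{(j)}}=n+\tfrac12+\tfrac{h_j}{\pi(n+1/2)}+O(n^{-3})$ one gets $\vv_s(\pi,\mu_n^{(j)})=(-1)^n(n+\tfrac12)^{-1}(h_s-h_j+O(n^{-2}))$ and $\Delta_k(\mu_n^{(j)})=(-1)^n(n+\tfrac12)^{-1}(z_k-h_j+\varkappa_n/n)$, so all relative corrections are $\varkappa_n/n$ and your cancellation closes: $\chi_n^{(j)}=\bigl(\Delta_1(\mu_n^{(j)})-\vv_j'(\pi,\mu_n^{(j)})\bigr)+O(n)\cdot\varkappa_n/n=(z_1-h_j)\cos(\rho_n\pi)+\int_0^\pi\mathscr K(t)\cos(\rho_n t)\,dt+\varkappa_n\in l_2$. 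So: same strategy as the paper, with the crucial cancellation made explicit rather than asserted, at the cost of one order of sharpening in the stated asymptotics, which you yourself identified as the remaining bookkeeping.
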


\begin{proof}
Fix $j \in \{ 1, \dots, m \}$.
Using \eqref{defvv} and \eqref{asymptmu}, we obtain
\begin{equation} \label{sm1}
\vv_s(\pi, \mu_n^{(j)}) = (-1)^n (n + \tfrac{1}{2})^{-1} (h_s - h_j + O(n^{-2})), \quad s \ne j, \quad n \to \iy.
\end{equation}
The asymptotics of Lemmas~\ref{lem:asymptDk} and~\ref{lem:vv} together imply
\begin{align*}
& \Delta_1(\mu_n^{(j)}) = (-1)^{n + 1} (n + \tfrac{1}{2}) \left( 1 + \tfrac{\varkappa_n}{n}\right), \\
& \Delta_k(\mu_n^{(j)}) = (-1)^n (n + \tfrac{1}{2})^{-1} \left(z_k - h_j + \tfrac{\varkappa_n}{n}\right), \quad k = \overline{2, m}.
\end{align*}
Substituting the latter asymptotics into \eqref{prodD}, we derive
$$
\Delta(\mu_n^{(j)}) = (-1)^{nm + 1} (n + \tfrac{1}{2})^{-(m-2)} \left( \prod_{k = 2}^m (z_k - h_j) + \frac{\varkappa_n}{n}\right).
$$
The similar asymptotic formula is valid for $\Delta_0(\la)$ defined by \eqref{Delta0}. Hence
\begin{equation} \label{sm2}
\hat \Delta(\mu_n^{(j)}) = n^{-(m - 1)} \varkappa_n.
\end{equation}
Substituting \eqref{sm1} and \eqref{sm2} into \eqref{defchi}, we conclude that $\{ \chi_n^{(j)} \} \in l_2$.
\end{proof}

\begin{proof}[Proof of Theorem~\ref{thm:char}]
\textit{Step 1.}
Suppose that $h \in \mathbb C^m_*$ and $\{ \la_{nk} \}_{n \ge 0, \, k = \overline{1, m}}$ are arbitrary complex numbers satisfying \eqref{asymptla}. Let us show that all the steps of Algorithm~\ref{alg:easy} are correct and as a result determine functions $p_j \in L_2(0, \pi)$, $j = \overline{1, m}$.

By virtue of Lemma~\ref{lem:asymptDk}, the infinite products in \eqref{smD} converge absolutely and uniformly on compact sets and determine the entire function $\Delta(\la)$. 
Fix $j \in \{ 1, \dots, m\}$.
By Lemma~\ref{lem:vv}, the sequence $\{ \vv_j(x, \mu_n^{(j)}) \}_{n = 0}^{\iy}$ is a Riesz basis in $L_2(0, \pi)$. Therefore, there exists the biorthonormal sequence $\{ w_n^{(j)} \}_{n = 0}^{\iy}$ also being a Riesz basis. By Lemma~\ref{lem:chi}, the sequence $\{ \chi_n^{(j)} \}_{n = 0}^{\iy}$ belongs to $l_2$. Thus, the series \eqref{pj} converges in $L_2(0, \pi)$ and defines the function $p_j \in L_2(0, \pi)$ for each $j = \overline{1, m}$.

\smallskip

\textit{Step 2.}
Consider the problem $L$ of form \eqref{eqv}-\eqref{mcint} with the constructed functions $p_j \in L_2(0, \pi)$, $j = \overline{1, m}$. Further, we have to prove that the eigenvalues of this problem $L$ coincide with the initially given $\{ \la_{nk} \}_{n \ge 0, \, k = \overline{1, m}}$. Define the characteristic function $\Delta^{\bullet}(\la)$ of the problem $L$ by \eqref{Delta} and put
$\hat \Delta^{\bullet}(\la) = \Delta^{\bullet}(\la) - \Delta_0(\la)$. By construction, one can easily show that 
\begin{equation} \label{sm3}
\hat \Delta(\mu_n^{(j)}) = \hat \Delta^{\bullet}(\mu_n^{(j)}), \quad j = \overline{1, m}, \quad n \ge 0.
\end{equation}

Using the relation
$$
    \hat \Delta^{\bullet}(\la) = \sum_{j = 1}^m \int_0^{\pi}p_j(x)\vv_j(x, \la)\, dx \prod_{\substack{s = 1\\ s \ne j}}^m \vv_s(\pi, \la),
$$
we get
\begin{equation} \label{sm4}
\hat \Delta^{\bullet}(\la) = o(\exp(m |\mbox{Im} \sqrt{\la}|\pi)), \quad |\la| \to \iy.
\end{equation}
The same asymptotics can be obtained for $\hat \Delta(\la)$ by using \eqref{prodD} and Lemma~\ref{lem:asymptDk}. Consider the function
$$
F(\la) := (\hat \Delta(\la) - \hat \Delta^{\bullet}(\la)) \prod_{j = 1}^m \vv_j^{-1}(\pi, \la).
$$
In view of \eqref{sm3}, $F(\la)$ is entire in $\la$. Using \eqref{defvv} and \eqref{sm4}, we show that $F(\la) = o(1)$ as $|\la| \to \iy$. By Liouville's theorem, $F(\la) \equiv 0$. Consequently, $\Delta(\la) \equiv \Delta^{\bullet}(\la)$, so the initially given values $\{ \la_{nk} \}_{n \ge 0, \, k = \overline{1, m}}$ coincide with the zeros of the characteristic function $\Delta^{\bullet}(\la)$ of the problem $L$. This yields the claim of Step~2.

\smallskip

\textit{Step 3.} It remains to prove the uniqueness of the IP solution. Consider two problems $L$ and $\tilde L$ with coefficients $\{ p_j \}_{j = 1}^m$ and $\{ \tilde p_j \}_{j = 1}^m$, respectively. Suppose that their eigenvalues coincide, that is, $\la_{nk} = \tilde \la_{nk}$, $n \ge 0$, $k = \overline{1, m}$. Then, $\Delta(\la) \equiv \tilde \Delta(\la)$ and so $\hat \Delta(\mu_n^{(j)}) = \tilde{\hat \Delta}(\mu_n^{(j)})$, $j = \overline{1, m}$, $n \ge 0$. It follows from \eqref{defchi} that $\chi_n^{(j)} = \tilde \chi_n^{(j)}$ for $j = \overline{1, m}$, $n \ge 0$. Taking the completeness of $\{ \vv_j(x, \mu_n^{(j)}) \}_{n = 0}^{\iy}$ in $L_2(0, \pi)$ into account, we conclude from \eqref{prodvv} that $p_j = \tilde p_j$ in $L_2(0, \pi)$. This completes the proof. 
\end{proof}

\section{Generalizations} \label{sec:gen}

In this section, some possible generalizations (I-V) and the connection of our results with the problems of \cite{BBV19, BV19, BK20, WZZW21, DH22} (VI-VII) are discussed.

\smallskip

\textbf{I.} First, notice that the condition $h_j \ne h_s$ for $j \ne s$ is crucial for the uniqueness of the IP solution. Indeed, suppose that, on the contrary, $h_j = h_s$ for some $j \ne s$. Then every term of the characteristic function $\Delta(\la)$ defined by \eqref{Delta} contains the multiplier $\vv_s(\pi, \la)$. Therefore, the spectrum $\Lambda$ contains the subsequence $\{ \mu_n^{(s)} \}_{n = 0}^{\iy}$ of the zeros of $\vv_s(\pi, \la)$, and this subsequence carries no information on the functions $\{ p_j \}_{j = 1}^m$.

Let us rigorously formulate the result for not necessarily distinct coefficients $\{ h_j \}_{j = 1}^m$. Without loss of generality, we assume that equal values among $\{ h_j \}_{j = 1}^m$ are consecutive:
$$
h_j = h_{j + 1} = \ldots = h_{j + m_j - 1}, \quad j \in S,
$$
where
$$
S := \{ 1 \} \cup \{j = \overline{2, m} \colon h_{j-1} \ne h_j \}, \quad
m_j := \# \{ s = \overline{1, m} \colon h_s = h_j \}, \quad j \in S.
$$

Then the characteristic function \eqref{Delta} can be represented in the form
\begin{equation} \label{Deltag}
\Delta(\la) = \left( \prod_{j \in S} \vv_j^{m_j-1}(\pi, \la) \right) \sum_{s \in S} \left( m_s \vv_s'(\pi, \la) + \sum_{l = 0}^{m_s - 1} \int_0^{\pi} p_{s + l}(x) \vv_s(x, \la) \, dx\right) \prod_{\substack{k \in S \\ k \ne s}} \vv_k(\pi, \la).
\end{equation}

Denote by $\{ z_k \}_{k \in S \setminus \{1\}}$ the roots of the polynomial
\begin{equation} \label{defP2}
P(z) := \left( \prod_{j \in S} (z - h_j)^{m_j-1} \right)^{-1} \frac{d}{dz} \prod_{j \in S} (z - h_j)^{m_j}.
\end{equation}
We will write $h = [h_j]_{j = 1}^m \in \mathbb C^m_{\bullet}$ if all the numbers $\{ h_j \}_{j \in S} \cup \{ z_k \}_{k \in S \setminus \{1\}}$ are distinct. In particular, this holds if $\{ h_j \}_{j = 1}^m$ are real.

Lemma~\ref{lem:asympt} applied to the function \eqref{Deltag} implies the necessity in the following theorem, generalizing Theorem~\ref{thm:char}.

\begin{thm} \label{thm:gen}
Let $h \in \mathbb C^m_{\bullet}$ be fixed.
For a multiset $\Lambda$ to be the spectrum of the problem $L$ of form \eqref{eqv}-\eqref{mcint}, it is necessary and sufficient to have the form 
$$
\Lambda = \{ \mu_n^{(j)} \}_{n \ge 0, \, j = \overline{1, m} \setminus S} \cup \{ \la_{nk} \}_{n \ge 0, \, k \in S},
$$
where $\{ \mu_n^{(j)} \}_{n = 0}^{\iy}$ are the zeros of $\vv_j(\pi, \la)$, $j = \overline{1, m}$,
\begin{align*}
    & \sqrt{\la_{n1}} = n + \frac{z_1}{\pi n} + \frac{\varkappa_{n1}}{n}, \\
    & \sqrt{\la_{nk}} = n + \frac{1}{2} + \frac{z_k}{\pi (n + \tfrac{1}{2})} + \frac{\varkappa_{nk}}{n^2}, \quad k \in S \setminus \{ 1 \}, 
\end{align*}
$\{ \varkappa_{nk} \} \in l_2$, $z_1 := \frac{1}{m} \sum\limits_{j \in S} m_j h_j$,
and $\{ z_k \}_{k \in S \setminus \{1\}}$ are the roots of the polynomial $P(z)$ defined by \eqref{defP2}.
Moreover, the spectrum $\Lambda$ uniquely specifies the sums
\begin{equation} \label{sump}
\sum_{l = 0}^{m_s-1} p_{s + l}(x), \quad s \in S.
\end{equation}
\end{thm}

Obviously, the part $\{ \mu_n^{(j)} \}_{n \ge 0, \, j = \overline{1, m} \setminus S}$ of the spectrum $\Lambda$ carries no information on the functions $\{ p_j \}_{j = 1}^m$.
The sums \eqref{sump} can be recovered from the part $\{ \la_{nk} \}_{n \ge 0, \, k = \overline{1, m}}$ of the spectrum $\Lambda$ by using Algorithm~\ref{alg:easy} with necessary modifications. Thus, the sufficiency and the uniqueness in Theorem~\ref{thm:gen} can be proved by repeating the arguments of Section~\ref{sec:easy}.

\smallskip

\textbf{II.} The results of this paper can be generalized to the case of the Sturm-Liouville system
\begin{equation} \label{StL}
-y_j'' + q_j(x) y_j = \la y_j, \quad x \in (0, \pi), \quad j = \overline{1, m},
\end{equation}
where $q_j$ are real-valued functions from $L_2(0, \pi)$, $j = \overline{1, m}$, with the BCs \eqref{bc} and the MCs \eqref{cont}-\eqref{mcint}. The functions $\{ q_j \}_{j = 1}^m$ should be known a priori.

In this case, the following two difficulties arise:
\begin{enumerate}
    \item For each fixed $j = \overline{1, m}$, denote by $\vv_j(x, \la)$ the solution of equation \eqref{StL} satisfying the initial conditions $\vv_j(0, \la) = 1$, $\vv_j'(0, \la) = h_j$.
    Some pairs of the functions $\{ \vv_j(\pi, \la) \}_{j = 1}^m$ may have common zeros, which are the eigenvalues of the problem \eqref{StL}, \eqref{bc}-\eqref{mcint} and carry no information about the functions $\{ p_j \}_{j = 1}^m$.
    \item Note that the asymptotic formula for $k = \overline{2, m}$ in \eqref{asymptla} contains the remainder $\frac{\varkappa_{nk}}{n^2}$. This is important for the investigation of IP~\ref{ip:main}. The estimate $\frac{\varkappa_{nk}}{n}$ of the remainder term is insufficient for the IP solution. For the Sturm-Liouville problem \eqref{StL}, \eqref{bc}-\eqref{mcint} even with $p_j = 0$, $j = \overline{1, m}$, the known methods (see \cite{MP15, Bond19}) imply the eigenvalue asymptotics \eqref{asymptla} with some reals $\{ z_k \}_{k = 1}^m$ and with the remainder $\frac{\varkappa_{nk}}{n}$ instead of $\frac{\varkappa_{nk}}{n^2}$ for $k = \overline{2, m}$.
\end{enumerate}

{\bf III.} The case of different edge lengths $\{ l_j \}_{j = 1}^m$ of the star-shaped graph also can be considered. If the functions $\{ \vv_j(l_j, \la) \}_{j = 1}^m$ do not have common zeros, then the method of Section~\ref{sec:easy}, obviously, can be applied by necessity.

\smallskip

{\bf IV.} The methods of this paper can be applied to investigation of IPs for differential operators on graphs with other types of nonlocal MCs. For example, the condition \eqref{mcint} can be replaced by the following one:
$$
\sum_{j = 1}^m \left( y_j'(\pi) + \int_0^{\pi} p_j(x) y_j'(x) \, dx \right) = 0, \quad p_j \in L_2(0, \pi).
$$

{\bf V.} For a more general graph structure, it is natural to consider eigenvalue problems with several frozen arguments at the graph vertices. The equations on the graph edges take the form similar to \eqref{eqvz} with $u_j(\pi)$ replaced by a linear combination of the function values at the vertices. From the physical point of view, such model corresponds to a structure with several sensors located at the graph vertices (see the Appendix).

\smallskip

{\bf VI.} Using the methods of this paper, one can obtain the results of the studies \cite{BBV19, BV19, BK20, WZZW21} concerning the inverse problems for the functional-differential operators with frozen argument on a finite interval in another way. Let us describe the main idea. Consider the boundary value problem
\begin{gather} \label{froz1}
    -u''(x) + q(x) u(a) = \la u(x), \quad x \in (0, 1), \\ \label{froz2}
    u^{(\alpha)}(0) = u^{(\beta)}(1) = 0,
\end{gather}
where $a \in (0, 1)$, $q \in L_2(0, 1)$, $\alpha, \beta \in \{ 0, 1 \}$. The adjoint problem has the form
\begin{gather*}
    -y''(x) = \la y(x), \quad x \in (0, 1), \quad y^{(\alpha)}(0) = y^{(\beta)}(1) = 0, \\
    y'(a + 0) - y'(a - 0) = \int_0^1 \overline{q(x)} y(x) \, dx.
\end{gather*}
The latter problem is equivalent to the following boundary value problem on the two-edge graph with integral MC:
\begin{gather} \label{Lap2}
    -y_j''(x_j) = \la y_j(x_j), \quad x_j \in (0, l_j), \quad j = 1, 2, \\ \label{fbc2}
    y_j^{(\alpha_j)}(0) = 0, \quad j = 1, 2, \\ \label{fmc2}
    y_1(l_1) = y_2(l_2), \quad \sum_{j = 1}^2 \left( y_j'(l_j) + \int_0^{l_j} p_j(x_j) y_j(x_j) \,dx_j \right) = 0,
\end{gather}
where $l_1 = a$, $l_2 = 1-a$, $y_1(x_1) = y(x_1)$, $y_2(x_2) = y(1-x_2)$, $\alpha_1 = \al$, $\al_2 = \beta$, $p_1(x_1) = \overline{q(x_1)}$, $p_2(x_2) = \overline{q(1-x_2)}$, $x_j \in [0, l_j]$. Hence, the recovery of the potential $q$ from the spectrum of the problem \eqref{froz1}-\eqref{froz2} is reduced to the following IP.

\begin{ip} \label{ip:But}
Given the spectrum $\{ \la_n \}_{n = 1}^{\iy}$ of the problem \eqref{Lap2}-\eqref{fmc2}, find $p_1$ and $p_2$.
\end{ip}

For $j = 1, 2$, denote by $\vv_j(x_j, \la)$ the solution of equation \eqref{Lap2} satisfying the initial conditions $\vv_j^{(\alpha_j)}(0, \la) = 0$, $\vv_j^{(1-\alpha_j)}(0, \la) = 0$, that is,
$$
\vv_j(x, \la) = \begin{cases}
                    (\sqrt{\la})^{-1} \sin(\sqrt \la x), \quad \al_j = 0, \\
                    \cos(\sqrt \la x), \quad \al_j = 1.
                \end{cases}
$$
The eigenvalues of the problem \eqref{Lap2}-\eqref{fmc2} coincide with the zeros of the characteristic function
$$
\Delta(\la) = \sum_{j = 1}^2 \left( \vv_j'(l_j, \la) + \int_0^{l_j} p_j(x_j) \vv_j(x_j, \la) \, d x_j \right) \vv_{3-j}(l_{3-j}, \la).
$$

Similarly to \cite{BBV19, BV19, BK20}, we obtain the following two cases:
\begin{itemize}
    \item \textit{Degenerate case}: $\vv_1(l_1, \la)$ and $\vv_2(l_2, \la)$ have common zeros, and the solution of IP~\ref{ip:But} is non-unique.
    \item \textit{Non-degenerate case}: $\vv_1(l_1, \la)$ and $\vv_2(l_2, \la)$ do not have common zeros, and the solution of IP~\ref{ip:But} is unique.
\end{itemize}

In the non-degenerate case, the solution of IP~\ref{ip:But} can be found by a constructive procedure similar to Algorithm~\ref{alg:easy}. Following the strategy of Section~\ref{sec:easy}, one can obtain the spectrum characterization for this case.

\smallskip

{\bf VII.} In the case of distinct reals $\{ h_j \}_{j = 1}^m$, Theorem~\ref{thm:char} can be derived from the results of Dobosevych and Hryniv~\cite{DH22}. Indeed, the boundary value problem~$L^*$ generates the operator
\begin{equation} \label{defB}
B = A + \langle ., \phi \rangle \psi,
\end{equation}
being a rank-one perturbation of the Laplacian $A u = [-u_j'']_{j = 1}^m$ in the Hilbert space $\mathcal H$ with the domain
$$
\mbox{dom}(A) = \{ u \in \mathcal H \colon u_j \in W_2^2[0, \pi], \, j = \overline{1, m}, \, u \: \textit{satisfies} \: \eqref{bcz}, \eqref{mcz} \}.
$$
In \eqref{defB}, $\phi$ is the Dirac $\de$-function corresponding to the central vertex of the graph, $\psi = [\overline{p_j}]_{j = 1}^m \in \mathcal H$. Clearly, the operator $A$ is self-adjoint. It can be shown that the eigenvalues of $A$ are simple and separated, so the theory of \cite{DH22} can be applied. However, in the case of non-real $\{ h_j \}_{j = 1}^m$, the operator $A$ is non-self-adjoint, so the method of \cite{DH22} does not work.

\section*{Appendix}

\setcounter{equation}{0}
\renewcommand\theequation{A.\arabic{equation}}

\setcounter{thm}{0}
\renewcommand\thethm{A.\arabic{thm}}

In the Appendix, we describe some physical processes modeled by the boundary value problems $L$ and $L^*$. The majority of the known mathematical models with integral BCs are constructed for differential operators on intervals. Here, we illustrate the idea of generalizing such models to the case of the star-shaped graph.

Firstly, we describe the model of a thermostat, following the ideas of \cite{Webb05, SD15}. Although in \cite{Webb05, SD15} nonlinear equations were studied, we consider the simplest linear heat equation
$$
\frac{\partial u}{\partial t} = a^2 \frac{\partial^2 u}{\partial x^2} , \quad t > 0, \quad x \in (0, 1),
$$
where $u = u(x, t)$ is a temperature of a heated bar at the point $x$ at the time $t$, $a > 0$ is a given constant. The BCs
$$
\frac{\partial u(x, t)}{\partial x}\bigg|_{x = 0} - h u(0, t) = 0, \quad \frac{\partial u(x, t)}{\partial x} \bigg|_{x = 1} + p u(\eta, t) = 0, \quad h, p \in \mathbb R, \quad \eta \in (0, 1],
$$
correspond to the heat exchange with the zero temperature environment at $x = 0$ and a controller at $x = 1$ adding or removing heat dependent on the temperature detected by a sensor at $x = \eta$. If there are several sensors at the points $\eta_1$, $\eta_2$, \ldots, $\eta_k$, then the right-hand BC takes the form
$$
\frac{\partial u(x, t)}{\partial x}\bigg|_{x = 1} + \sum_{j = 1}^k p_j u(\eta_j, t) = 0.
$$
Furthermore, if the sensors collect information on the temperature from the whole bar, then we obtain the integral condition
$$
\frac{\partial u(x, t)}{\partial x}\bigg|_{x = 1} + \int_0^1 p(x) u(x, t) \, dx = 0.
$$

Now consider $m$ bars of equal length $\pi$ connected all together as a star-shaped graph. The boundary value problem
\begin{gather} \nonumber
\frac{\partial u_j}{\partial t} = a^2 \frac{\partial^2 u_j}{\partial x_j^2} , \quad t > 0, \quad x_j \in (0, \pi), \quad j = \overline{1, m}, \\ \label{bc1}
\frac{\partial u_j(x_j, t)}{\partial x_j}\bigg|_{x_j = 0} - h_j u_j(0, t) = 0, \quad j = \overline{1, m}, \\ \label{mc1}
u_1(\pi, t) = u_j(\pi, t), \quad j = \overline{2, m}, \\ \label{mc2}
\sum_{j = 1}^m \left(\frac{\partial u_j(x_j, t)}{\partial x_j}\bigg|_{x_j = \pi} + \int_0^{\pi} p_j(x_j) u_j(x_j, t) \, dx_j \right) = 0
\end{gather}
models the heating process in this structure with a controller at the central vertex, corresponding to $x_j = \pi$. The controller adds or removes heat depending on the temperature on the whole graph. 

Similarly, a star-shaped graph of vibrating strings connected all together at the central vertex can be considered. Suppose that a controller is located 
at the central vertex.
If the influence of the controller
depends on the displacements of the whole graph, then such model is described by the equations
$$
\frac{\partial^2 u_j}{\partial t^2} = a^2 \frac{\partial^2 u_j}{\partial x_j^2} , \quad t > 0, \quad x_j \in (0, \pi), \quad j = \overline{1, m}, 
$$
with the conditions \eqref{bc1}-\eqref{mc2}. Here $u_j(x_j, t)$ is the lateral displacement of the $j$-th string at the point $x_j$ at the time $t$, \eqref{mc1} is the continuity condition at the central vertex, and the condition \eqref{mc2} describes the balance of forces at this vertex.

Obviously, the separation of variables in the both boundary value problems implies the eigenvalue problem $L$ of form \eqref{eqv}-\eqref{mcint}.

On the other hand, some physical models with frozen argument can be constructed by using the ideas of \cite{Krall75, BH21}.
The eigenvalue problem $L^*$ can be obtained by separating the variables in the boundary value problem for the following parabolic (hyperbolic) equations:
\begin{equation} \label{hp}
\frac{\partial^{\nu} u_j}{\partial t^{\nu}} = a^2 \frac{\partial^2 u_j}{\partial x_j^2} - \overline{p_j(x_j)} u_j(\pi, t)
\quad \nu = 1 \: (\nu = 2),
\quad t > 0, \quad x_j \in (0, \pi), \quad j = \overline{1, m},
\end{equation}
with the conditions \eqref{bc1}-\eqref{mc1} and
\begin{equation} \label{stmc}
\sum_{j = 1}^m \frac{\partial u_j(x_j, t)}{\partial x_j}\bigg|_{x_j = \pi} = 0.
\end{equation}

In the parabolic case, the system \eqref{hp}, \eqref{bc1}, \eqref{mc1}, \eqref{stmc} models the heat conduction in a star-shaped structure of rods possessing a sensor at the central vertex and an external distributed heat source. This source is described by the functions
\begin{equation} \label{deff}
f_j(x_j, t) = -\overline{p_j(x_j)} u_j(\pi, t), \quad j = \overline{1, m},
\end{equation}
on the edges of the graph, that is, the source power is proportional to the temperature at the central vertex measured by the sensor. In is shown in \cite{BH21} that such model can be implemented by electric conductive rods
of a constant thermal conductivity but possessing the variable electrical resistance
independent of the temperature.

In the hyperbolic case, the system \eqref{hp}, \eqref{bc1}, \eqref{mc1}, \eqref{stmc} corresponds to an oscillatory process under a damping external force $f_j(x, t)$ defined by \eqref{deff}. This force is proportional to the displacement at the internal point measured by a sensor. In particular, this model is relevant to vibrating wires affected by a magnetic field (see \cite{BH21} for details).
It is worth mentioning that the MC \eqref{stmc} corresponds to the law of energy conservation in the parabolic case and to the balance of forces at the central vertex in the hyperbolic case.

Thus, from the physical point of view, there are two principal situations:

\begin{enumerate}
    \item A controller located at one point is influenced by sensors that collect information from the whole structure. Then, the process is modeled by a boundary value problem with integral MCs.
    \item A system is influenced by a distributed force depending on measurements of a sensor located at one point. Such processes are described by functional-differential equations with frozen argument.
\end{enumerate}

From the mathematical point of view, these two situations correspond to mutually adjoint operators. The inverse spectral problem, which consists in the recovery of the unknown coefficients from the spectrum, corresponds to the construction of such physical systems with desired properties.

\bigskip

{\bf Funding.} This work was supported by Grant 21-71-10001 of the Russian Science Foundation, https://rscf.ru/en/project/21-71-10001/.

\medskip

{\bf Acknowledgement.} The author is grateful to Professors Sergey A. Buterin and Baltabek E. Kanguzhin for valuable discussions.

\medskip

\noindent Natalia Pavlovna Bondarenko \\
1. Department of Applied Mathematics and Physics, Samara National Research University, \\
Moskovskoye Shosse 34, Samara 443086, Russia, \\
2. Department of Mechanics and Mathematics, Saratov State University, \\
Astrakhanskaya 83, Saratov 410012, Russia, \\
e-mail: {\it BondarenkoNP@info.sgu.ru}
\end{document}